\newtheorem{Lem}{Lemma}
\newtheorem{lemma}[Lem]{Lemma}
\newtheorem{theorem}[Lem]{Theorem}
\theoremstyle{definition}
\DeclareMathOperator{\re}{Re}
\DeclareMathOperator{\I}{I}
\DeclareMathOperator{\II}{II}
\newcommand{\N}{\mathbb{N}}
\newcommand{\C}{\mathbb{C}}
\newcommand{\restrict}{\!\upharpoonright\!}
\DeclareMathOperator{\fin}{fin}
\DeclareMathOperator{\supp}{supp}
\newcommand{\gf}{G^{\fin}}
\renewcommand{\epsilon}{\varepsilon}
\renewcommand{\phi}{\varphi}
\newcommand{\myspace}[1]{\vspace{1ex}}
\newcommand{\myright}{\thepage}
\newcommand{\myleft}{Fabio E.~Tonti, Asger Törnquist}
\subjclass[2010]{03E15}
\keywords{Descriptive Set Theory, Borel Equivalence Relations,Group Representations,Countable Groups}
\date{\today, \currenttime}
\title{A short proof of Thoma's theorem on type I groups}
\author{Fabio Elio Tonti}
\address{Institute of Discrete Mathematics and Geometry, 
Technische Universit\"at Wien, Wiedner Hauptstrasse 8-10/104, 1040 Vienna, Austria}
\email{fabio.tonti@tuwien.ac.at}
\author{Asger T\"ornquist}
\address{Department of Mathematical Sciences, University of Copenhagen, Universitetsparken 5, 2100 Copenhagen, Denmark}
\email[Corresponding author]{asgert@math.ku.dk}
\begin{document}
\begin{abstract}
In the theory of unitary group representations, a group is called \emph{type I} if all factor representations are of type I, and by a celebrated theorem of James Glimm \cite{Glimm61}, the type I groups are precisely those groups for which the irreducible unitary representations are what descriptive set theorists now call ``concretely classifiable''. Elmar Thoma \cite{Thoma64} proved the following surprising characterization of the countable discrete groups of type I: They are precisely those that contain a finite index abelian subgroup. In this paper we give a new, simpler proof of Thoma's theorem, which relies only on relatively elementary methods.
\end{abstract}

\maketitle


\section{Introduction}\label{ss:intro}

{\bf (A)} In recent years there has  been a renewed interest in the classification problem for irreducible unitary representations. The current interest comes primarily from descriptive set theorists, who in the past 30 years have developed a vast theory, called \emph{Borel reducibility theory}, for measuring the relative complexity of classification problems in mathematics. Turning these techniques toward the problem of classifying irreducible representations of groups (and C$^*$-algebras) is, satisfyingly, a sort of homecoming: Modern Borel reducibility theory grew out of the smooth/non-smooth dichotomy that originated in the work of Mackey, Glimm and Effros, among others, exactly with the purpose of quantifying the phenomenon that for some groups and C$^*$-algebras, their irreducible unitary representations are concretely classifiable (i.e.,~the unitary dual is ``smooth''), while for other groups and C$^*$-algebras, no reasonable classification seems to be possible (i.e.,~the unitary dual is not smooth, equivalently, is not countably separated). See \cite{Glimm61, Glimm61+, Effros65, Effros81}.

Glimm's celebrated ``smooth dual if and only if type I'' theorem (in (\cite[Theorem 2]{Glimm61}) is a wellspring: A string of results in descriptive set theory,  \cite{Hjorth97, Farah12, Thomas15}, have sharpened Glimm's result considerably, showing that when the unitary dual is not smooth, then the complexity of classifying the irreducible representations is very high. It is still an active research problem to determine the exact complexity of classifying the irreducible unitary representations for various groups (see \cite{Thomas15}).

In the important special case when we consider unitary representations of countable discrete groups, a cornerstone theorem is the following result due to Thoma:

\begin{theorem}[Thoma 1964, {\cite[Satz 6, p.133ff.]{Thoma64}}]\label{t.thoma}
A countable discrete group is type I if and only if it is abelian-by-finite (i.e.,~contains a finite index abelian subgroup).
\end{theorem}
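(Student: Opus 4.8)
The plan is to use the descriptive set-theoretic reformulation of type I recalled above: by Glimm's theorem $G$ is type I if and only if unitary equivalence on $\mathrm{Irr}(G)$, the set of irreducible unitary representations of $G$, is smooth, equivalently admits no Borel reduction of $E_0$; equivalently again (unwinding the definition of type I directly) it suffices to decide whether $G$ has a factor representation that is not of type I. The target is then the following sharp statement: precisely when $G$ is not abelian-by-finite, $G$ admits a type $\mathrm{II}_1$ factor representation, and since a type $\mathrm{II}_1$ factor is not a type I factor, this settles the non-trivial implication by the very definition of type I.

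For the implication ``abelian-by-finite $\Rightarrow$ type I'', let $A\trianglelefteq G$ be abelian of finite index $n$ (passing to the core of an abelian subgroup of finite index makes it normal). The first step is Clifford theory over $A$: for $\pi\in\mathrm{Irr}(G)$ the restriction $\pi|_A$ disintegrates over $\widehat A$; irreducibility forces the characters that occur to form a single finite $G$-orbit $\mathcal O$ of size $[G:S]$, where $S=\mathrm{Stab}_G(\chi_0)\supseteq A$; and the $\chi_0$-isotypic subspace carries an irreducible projective representation of the finite group $S/A$, of dimension at most $[S:A]$. Hence $\dim\pi\le[G:S][S:A]=n$, so every irreducible representation of $G$ is finite-dimensional of dimension $\le n$. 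The $d$-dimensional representations ($d\le n$) form the Polish space $\mathrm{Hom}(G,U(d))$, and two of them are unitarily equivalent exactly when the Borel map $\pi\mapsto(g\mapsto\operatorname{tr}\pi(g))\in\mathbb{C}^G$ sends them to the same point; thus unitary equivalence on $\mathrm{Irr}(G)$ is smooth and $G$ is type I. (Equivalently: bounded representation dimension says $C^*(G)$ is subhomogeneous, hence type I.)

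For the converse, assume $G$ is not abelian-by-finite; the plan is to manufacture a type $\mathrm{II}_1$ factor representation of $G$. Form the upper FC-central series $N_0=\{e\}$, $N_{\alpha+1}/N_\alpha=FC(G/N_\alpha)$, with unions at limits; as $G$ is countable this stabilises at some $N_\infty$, and $G/N_\infty$ has trivial FC-centre, i.e.\ is ICC. If $N_\infty\ne G$, then $Q:=G/N_\infty$ is an infinite ICC group, its left regular representation generates the $\mathrm{II}_1$ factor $L(Q)$, and composing with $G\twoheadrightarrow Q$ gives the desired representation of $G$. The remaining case, $N_\infty=G$ (so $G$ is FC-hypercentral but still not abelian-by-finite), is the technical heart. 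Here the mechanism is the one visible in the two model examples, the integer Heisenberg group $H_3(\mathbb{Z})\cong\mathbb{Z}^2\rtimes\mathbb{Z}$ and a restricted sum $\bigoplus_i F_i$ of finite non-abelian groups: in the first kind of situation one extracts a central subgroup $A\le Z(G)$ with $G/A$ infinite and a character $\lambda\in\widehat A$ whose induced $\mathbb{T}$-valued $2$-cocycle $\sigma_\lambda$ on $G/A$ is non-degenerate enough that the twisted group von Neumann algebra $L_{\sigma_\lambda}(G/A)$ is a factor, so its canonical trace pulls back to a $\mathrm{II}_1$-factor character of $G$ (for $H_3(\mathbb{Z})$ this is an irrational rotation cocycle on $\mathbb{Z}^2$); when $Z(G)$ is too small for this (as for $\bigoplus_i F_i$, whose centre is trivial) one instead produces a quotient of $G$ of the form $\bigoplus_i F_i$ with the $F_i$ finite non-abelian and builds a $\mathrm{II}_1$-factor character of $G$ as an infinite tensor product $\bigotimes_i\chi_i$ of fixed non-one-dimensional characters $\chi_i$, whose GNS representation generates the hyperfinite $\mathrm{II}_1$ factor.

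The main obstacle is exactly this last case: establishing the clean group-theoretic dichotomy that an FC-hypercentral group which is not abelian-by-finite always carries one of these two configurations, and arranging the bookkeeping so that the resulting trace is genuinely extremal with hyperfinite $\mathrm{II}_1$ GNS algebra. Everything surrounding it — the Clifford bound, smoothness of bounded-dimensional duals, the ICC case, and the routine checks of extremality — is comparatively mechanical; and if one prefers to remain inside descriptive set theory, the output of the hard case can instead be packaged as an explicit Borel reduction of $E_0$ into unitary equivalence on $\mathrm{Irr}(G)$.
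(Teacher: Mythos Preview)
Your proposal is honest about its own gap: the FC-hypercentral case is left as two suggestive examples (Heisenberg-type twisted group algebras, and infinite restricted sums of finite non-abelian groups) together with an unproved ``clean group-theoretic dichotomy'' asserting that every FC-hypercentral non-abelian-by-finite group carries one of these two configurations. You give no argument for that dichotomy, and without it the hard direction is not proved. Note also that your case split does not line up with the paper's: the Heisenberg group $H_3(\mathbb{Z})$ has $\gf=Z(G)$ of infinite index (so it lies in the paper's easy case) but is nilpotent, hence FC-hypercentral (so it lies in your hard case, where you would need the cocycle argument you only sketch).

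The paper bypasses the structure theory of FC-hypercentral groups entirely. Its split is on $[G:\gf]$, not on the FC-hypercentre. When $[G:\gf]=\infty$, an orthogonality argument on coset representatives shows that almost every factor in the decomposition of $L(G)$ is of type $\II_1$. When $[G:\gf]<\infty$, the paper proves one concrete group-theoretic lemma: if $\gf$ is not abelian-by-finite, one can recursively choose non-commuting pairs $g_i,h_i\in\gf$ inside successive finite-index normal subgroups (centralisers of the finitely many conjugates already used) so that the subgroups $G_i=\langle [g_i]_{\gf},[h_i]_{\gf}\rangle$ are non-abelian and pairwise commuting. A matrix-unit dimension-growth argument on the tower $S(G_1\cdots G_N)\subseteq L(G)$ then forces $\II_1$ factors on a set of trace-measure at least $\tfrac12$. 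This single construction works uniformly for every non-abelian-by-finite $\gf$ and never needs to decide whether the obstruction ``looks like Heisenberg'' or ``looks like $\bigoplus_i F_i$''; it is exactly the missing idea in your hard case.
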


The purpose of this paper is to present a new and more accessible proof of \Cref{t.thoma}. The concrete and easy to understand statement of this theorem makes it broadly useful, not just for unitary representations of groups. For instance, it is often used in ergodic theory in combination with the so-called Gaussian construction to achieve a large family of non-conjugate measure-preserving ergodic actions of a given non-type I group, see e.g. \cite{Hjorth05, Tornquist09, Ioana11, Epstein07}.

\Cref{t.thoma} gives a dichotomy in the classification of irreducible unitary representations: A countable discrete group is either abelian-by-finite, and the classification of its irreducible unitary representations is smooth, or it is not abelian-by-finite, and the classification of its irreducible unitary representations is horribly difficult, as measured by Borel reducibility. It is interesting to note that Thomas \cite{Thomas15} has proved that for all \emph{amenable} non-abelian-by-finite countable discrete groups, the classification of the irreducible unitary representations always form the same non-smooth Borel reducibility degree, independently of the group. Thomas has asked in \cite{Thomas15} if the complexity of classifying the irreducible unitary representations of \emph{non-amenable} countable discrete groups is strictly higher, but this is not yet known. A theorem of this nature could be viewed as giving a second dichotomy for the classification of irreducible unitary representations, with \Cref{t.thoma} being the first dichotomy.

\medskip

{\bf (B)} Let us briefly comment on our proof in relation to Thoma's original proof. The difficult part of the theorem is to show ``only if''. Let us fix a countable discrete group $G$ for this discussion.

Thoma's proof is based on the development of a ``bespoke'', or ``custom made'', direct integral decomposition theory for what he calls ``traces'' on the algebra $\mathfrak A(G)$ of all finitely supported complex valued functions on $G$. These traces in turn correspond to positive definite functions on $G$ that are invariant under conjugacy (``class finite positive definite functions'', in Thoma's terminology).

By contrast, our proof uses only the standard direct integral decomposition theory for von Neumann algebras into factors, and unitary representations into factor representations, and so our proof avoids building up a special, custom made direct integral decomposition theory altogether.

\medskip

In our proof, as well as in Thoma's proof, a key role is played by the normal subgroup 
$$
\gf=\{g\in G: |[g]_G|<\infty\}
$$
consisting of all the elements in $G$ with finite conjugacy classes. Note: Throughout the paper we will use the notation $[g]_G=\{hgh^{-1}: h\in G\}$.

\medskip

The ``easy'' case is when $[G:\gf]=\infty$, i.e.,~$\gf$ has infinite index in $G$. In this case, our proof and Thoma's follow a rather similar route: We show in \Cref{s.easy} that almost every factor representation in the factor decomposition of the  regular representation is of type $\II_1$. Our proof is essentially an elaboration on the well-known proof that the group von Neumann algebra $L(G)$ of an icc group is a $\II_1$ factor. Thoma achieves the same conclusion in \cite[Satz 3]{Thoma64}, but he proves it by analyzing extensions of class finite positive definite functions from $\gf$ to all of $G$ from the point of view of his custom made direct integral theory.

\medskip

The ``hard'' case for us, as well as for Thoma, is the case when $[G:\gf]<\infty$, and it is also here that our proof differs most significantly from Thoma's. Our proof in this case is found in \Cref{s.hard} below. Aiming to prove the contrapositve of the ``only if'' in \Cref{t.thoma}, we assume that $G$ is \emph{not} abelian-by-finite, and proceed to construct inside of $\gf$ a sequence $(G_i)_{i\in\N}$ of pairwise commuting non-abelian subgroups. We show that the subgroups $G_1\cdots G_N$ give rise to von Neumann subalgebras $S(G_1\cdots G_N)$ of $L(G)$ with the property that at least half the factors in the factor decomposition of $S(G_1\cdots G_N)$ will have dimension $>k$ when $N$ is chosen sufficiently large. This in turn gives us that $L(G)$ contains a subalgebra which is of type $\II_1$, and so $L(G)$ is not purely of type I, which implies the result.

Thoma takes quite a different path in the ``hard'' case. He assumes that $G$ is type I, and assumes w.l.o.g.~that $G=\gf$. If $G$ is finitely generated, the assumption $G=\gf$ easily gives that $G$ is abelian-by-finite. Therefore Thoma can assume that $G$ is not finitely generated, and so there is a strictly increasing sequence of finitely generated subgroups $\mathfrak K_0\subset\mathfrak K_1\subset\cdots\subset G$, which exhausts $G$. He then considers an ``extremal'' class finite positive definite function $\alpha$ on $G$, which by his bespoke direct integral representation theory can be identified with a factor representation. He then analyses $\alpha$ in terms of its restrictions $\alpha_i$ to the subgroups $\mathfrak K_i$. The assumption that $G$ is type I puts a bound on the growth of the dimensions of the factor representations that arise from the $\alpha_i$. This bound, through several further arguments, allows Thoma to find a finite index abelian subgroup in $G$.

\medskip

{\bf (C)} We have taken pains to write this paper in such a way that it will be accessible to a broad group of mathematicians, including, we hope, descriptive set theorists and ergodic theorists. Our proof of \Cref{t.thoma} relies heavily on the elementary theory of factor decompositions of unitary representations and von Neumann algebras, and in \Cref{s.bg} we give a fairly detailed account of these matters. The reader who is an expert in von Neumann algebras and/or the theory of unitary representations will probably feel that some of the minor lemmata and their proofs, which we spell out in detail in this paper, are routine and could be omitted. We hope the \emph{expert} will tolerate this level of detail, knowing that these details are spelled out for the benefit of the \emph{non-expert}.

\subsection*{Acknowledgments} The first author is currently a PhD student at the Institute of Discrete Mathematics and Geometry, TU Wien, and a DOC Fellow of the Austrian Academy of Sciences. The first author furthermore gratefully acknowledges support by the Austrian Science Fund (FWF) grants no. P26737 and P30666. The second author thanks the Danish Council for Independent Research for generous support through grant no.\ 7014-00145B. The second author also acknowledges his association with the Centre for Symmetry and Deformation, funded by the Danish National Research Foundation (DNRF92).

\section{Background and notation}\label{s.bg}
%

In this section we review some of the basic decomposition theory of unitary representations and von Neumann algebras and some facts about group von Neumann algebras. A more detailed overview of these things can be found in \cite[III.5, III.3.3]{Blackadar06}, and a fuller account may be found in \cite{Dixmier96, Nielsen80, KR97}. We also fix notation that will be used in the rest of the paper.

\subsection{Unitary representations and von Neumann algebras} 

Given a Hilbert space $\mathcal H$, we will denote by $\mathcal B(\mathcal H)$ the set of bounded operators on $\mathcal H$, and by $\mathcal U(\mathcal H)$ the group of unitary operators on $\mathcal H.$ A \emph{unitary representation} of a countable discrete group $G$ is simply a homomorphism $\pi:G \to \mathcal{U}(\mathcal{H})$. 
To $\pi$ we associate
$$
\pi(G)'=\{T\in\mathcal B(H): (\forall g\in G)\ T\pi(g)=\pi(g) T\}.
$$
It is easy to show that $\pi(G)'$ is a \emph{von Neumann algebra}, i.e.,~a $*$-subalgebra of $\mathcal B(H)$ which is closed in the weak operator topology (equivalently, in the ultraweak operator topology), and which contains the identity operator $I$ on $\mathcal H$. The representation $\pi$ is called a \emph{factor representation} (or sometimes in older references, a \emph{primary} representation) if $\pi(G)'$ is a factor\footnote{Unlike \cite{Blackadar06}, but similar to \cite{Dixmier96,Nielsen80,HT12}, we take the view that it is the structure of $\pi(G)'$ that is the most relevant to analyzing the representation $\pi$, and not $\pi(G)''$. This is because the structure of projections in $\pi(G)'$ is more relevant to analyzing $\pi$ than the projections in $\pi(G)''$, since the projections in $\pi(G)'$ correspond to exactly invariant subspaces of $\pi$.} in the sense of von Neumann algebra theory, the definition of which is: a von Neumann algebra $M\subseteq \mathcal B(H)$ is a \emph{factor} if the \emph{center} of $M$,
$$
Z(M)=\{x\in M: (\forall y\in M)\ xy=yx\},
$$
consist of multiples of $I$, that is $Z(M)=\C I$. Von Neumann factors are naturally categorized in \emph{types}, called I, II, and III, which break into further subtypes. The type of a factor representation $\pi$ is the type of $\pi(G)'$. In this paper we will only need to consider type $I_n$, for $n\in\N$, and $\II_1$ factors, and we give a working definition of these at the end of this section.

Every von Neumann algebra $M\subseteq\mathcal B(\mathcal H)$ admits a direct integral decomposition into factors. This means: There is a measure space $(X,\mu)$ and a measurable assignment $x\mapsto \mathcal H^x$ of Hilbert spaces, and a measurable assignment $x\mapsto F^x\subseteq\mathcal B(\mathcal H^x)$ of factors such that
$$
M=\int F^xd\mu(x),
$$
in the sense that, in a natural way, $\mathcal H=\int\mathcal H^xd\mu(x)$, and $M$ is generated by the measurable functions $f:X\to \mathcal B(\mathcal H^x)$ where $f(x)\in F^x$ for $\mu$-almost every $x$. In this picture, $Z(M)$ corresponds to the measurable functions $f:X\to\mathcal B(\mathcal H^x)$ where $f(x)=c^x I(x)$, where $c^x\in\C$ and $I(x)$ is the identity operator on $\mathcal H^x$.

We note that in the special case when $\mathcal H$ is separable (which will always be the case in this paper), the measure space $(X,\mu)$ in the decomposition is a standard measure space (in the sense of e.g.~\cite{Kechris95}), and we could have replaced ``measurable'' with ``Borel'' above. See \cite{Effros65b,Effros66,HaaWin98,HaaWin00} for a detailed development of the Borel theory of separably acting von Neumann algebras.

\medskip

Turning our attention back to unitary representations, if we are given $\pi:G\to\mathcal U(\mathcal H)$, the factor decomposition of $\pi(G)'$ gives rise to an \emph{integral decomposition of the representation $\pi$ into factor representations}. That is, for any unitary representation, we can find a measure space $(X,\mu)$ and a decomposition $\mathcal H=\int\mathcal H^xd\mu(x)$ of the Hilbert space, and a measurable assignment $x\mapsto \pi^x(g)\in\mathcal U(\mathcal H^x)$ for each $g\in G$, such that $g\mapsto \pi^x(g)$ is a factor representation on $\mathcal H^x$ (a.e).

\subsection{The regular representation and the group von Neumann algebra} 

The left regular representation $\lambda$ is the representation $\lambda:G\to l^2(G)$ given by $\lambda(g)=u_g$, where
$$
(u_g(f))(h)=f(g^{-1}h).
$$
The right regular representation $\rho: G\to l^2(G)$ is defined by $(\rho(g))(f)(h)=f(hg)$.

The (left) group von Neumann algebra $L(G)$ is the von Neumann algebra generated by $\lambda(G)$. We let $R(G)$ be the von Neumann algebra generated by $\rho(G)$. It turns out that $L(G)'=R(G)$ and $R(G)'=L(G)$ (\cite[Theorem 6.7.2]{KR97}). Since much of this paper is be concerned with analyzing the factor decomposition of $L(G)$, it is worth noting that because $L(G)=\rho(G)'$, this actually corresponds to analyzing the factor decomposition of the \emph{right} regular representation. Of course, we could have chosen to focus on $R(G)$ and the representation $\lambda$ instead, with much the same result.

The group von Neumann algebra $L(G)$ is equipped with a natural \emph{trace} $\tau:L(G)\to\C$, which is continuous w.r.t.~the ultraweak operator topology (see \cite[p. 14]{Blackadar06}), and which is uniquely determined by requiring that
$$
\tau(\sum_{g \in G} \alpha_g u_g)=\alpha_e
$$
on all finite sums $\sum_{g \in G} \alpha_g u_g$, where $\alpha_g\in\C$. The trace is a linear functional, and satisfies $\tau(I)=1$ and $\tau(ab)=\tau(ba)$ for all $a,b\in L(G)$. Moreover, the trace is \emph{faithful}, meaning that $\tau(aa^*)=0$ iff $a=0$. In particular, if $p\in L(G)$ is a projection, i.e.,~$p=p^*=p^2$, then $\tau(p)=0$ iff $p=0$. The trace gives rise to an inner product $\langle a,b\rangle_\tau=\tau(ab^*)$ on $L(G)$, which in turn gives rise to a norm $\|a\|_\tau=\sqrt{\langle a,a\rangle_\tau}$. Note that unitaries in $L(G)$ are unit vectors in $\|\cdot\|_\tau$, and that the family $(u_g)_{g\in G}$ forms an orthonormal set in $(L(G),\|\cdot\|_\tau)$. Warning: $L(G)$ is in general not complete in this norm.

\medskip

In our proofs, we will mostly be considering a fixed group $G$ together with a subgroup $H<G$. In this setting, we will denote by $S(H)$ the von Neumann algebra generated by the unitaries $u_h$, where $h$ ranges over $H$. The ambient group $G$ will always be clear from context. Clearly, $S(H)$ is a von Neumann subalgebra of $L(G)$, and the restriction of the trace $\tau$ on $L(G)$ is a trace on $S(H)$.

\medskip
  
We will often consider the factor decomposition of $L(G)$ or $S(H)$ for some $H<G$, e.g. 
$$ 
L(G) = \int_{(X,\mu)} F^x d \mu(x) \, .
$$
Every central projection (i.e.,~projection in $Z(L(G))$) is of the form $p=\int_B I(x) d\mu(x)$ for some measurable $B\subseteq X$, and where $I(x)$ is the identity operator in $F^x$. So we can think of central projections as measurable subsets of $X$, and we let $\supp(p)$ denote the measurable set corresponding to the central projection $p$. We can then define a new measure on $X$ by $\mu_\tau(B)=\tau(p)$, and since $\tau$ is faithful, this measure is absolutely equivalent to $\mu$. For this reason, we could just as well have used the measure $\mu_\tau$ coming from the trace in the factor decomposition of $L(G)$ (or $S(H)$), and this what we will always do from now on when decomposing $L(G)$ and $S(H)$.

When we use the measure coming from the trace in decomposing $L(G)$ or $S(H)$, then the trace itself decomposes nicely: We can find a measurable assignment $x\mapsto \tau^x$ of traces on each factor $F^x$ in the decomposition so that
$$
\tau(a)=\int \tau^x(a^x)d\mu_\tau(x).
$$

\subsection{Type $\I_n$ and $\II_1$} We make the following practical working definition, which will suffice for the purposes of this paper. A von Neumann factor $F$ is type $\I_n$ if it is isomorphic to the matrix algebra $M_n(\C)$. A von Neumann factor $F$ is type $\II_1$ if it is infinite dimensional (as a vector space) and admits an ultraweakly continuous trace $\tau:F\to\C$ with $\tau(I)=1$. See \cite[III.1]{Blackadar06}.

\subsection{Murray-von Neumann equivalence of projections} On one occasion in the proof we will make use of the following notion: In a von Neumann algebra $M$, two projections $p,q\in M$ are \emph{Murray-von Neumann equivalent} if there is $u\in M$ (called a \emph{partial unitary}) such that
$$ 
u^* u=p \text{ and } uu^* =q \, .
$$

\subsection{Corners vs.~subalgebras} Unlike C$^*$-algebras, von Neumann algebras are always assumed to contain the identity operator (which is then the unit element of the algebra). In particular, a von Neumann subalgebra of another von Neumann algebra is always a \emph{unital} subalgebra.

On a few occasions, we will consider the algebra $pMp=pM=Mp$, where $M$ is a von Neumann algebra and $p\in Z(M)$ is a central projection. It is clear that this is a $*$-subalgebra, but it is not unital unless $p=I$. We will call such a subalgebra a \emph{corner} of $M$.

\subsection{Inclusions among abelian von Neumann algebras} Finally, we will on several occasions consider a situation where we are given abelian von Neumann subalgebras $A\subseteq B\subseteq L(G)$, where $G$ is some countable discrete group.

In general, abelian von Neumann algebras are isomorphic to $L^\infty(X,\mu)$ acting by pointwise multiplication in $L^2(X,\mu)$, for some measure space $(X,\mu)$ (see \cite[p.~236]{Blackadar06}). If we identify $A$ (resp.~$B$) with $L^\infty(X_A,\mu_A)$ (resp.~$L^\infty(X_B,\mu_B)$) in this way, we can always assume that the measure comes from the trace inherited from $L(G)$. Moreover, under this identification, the projections in $A$ (resp.~$B$), correspond to measurable subsets of $X_A$ (resp.~$X_B$), and vice versa. Since every projection in $A$ is also a projection in $B$, this means that the measurable subsets of $X_A$ can be identified with a sub-$\sigma$-algebra of the measurable subsets of $X_B$, and since the measures $\mu_A$ and $\mu_B$ are derived from the trace on $L(G)$, they agree on this sub-$\sigma$-algebra.

Another possible view to take is that the situation $A\subseteq B\subseteq L(G)$ gives rise to a measure-preserving surjection $\vartheta: X_B\to X_A$. However, we will work with the sub-$\sigma$-algebra view described above, as it is easier in our setting.


\section{The ``easy'' case: $[G:\gf]=\infty$}\label{s.easy}
In the case when $\gf$ has infinite index in $G$, \Cref{t.thoma} follows from:

\begin{theorem}\label{t.caseinf}
Let $G$ be a countable discrete group, and suppose $[G:\gf]=\infty$. Then every factor in the factor decomposition of $L(G)$ is of type $\II_1$. 
\end{theorem}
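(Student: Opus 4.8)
The plan is to show that, because $[G:\gf]=\infty$, the regular representation restricted to the center $Z(L(G))$ leaves no room for a type $\I_n$ factor to appear with positive trace-measure; equivalently, that $\mu_\tau$-almost every factor $F^x$ in the decomposition of $L(G)$ is infinite-dimensional while still carrying the (decomposed) finite trace $\tau^x$ with $\tau^x(I(x))=1$, which is exactly the working definition of type $\II_1$ given in \Cref{s.bg}. Since the decomposed traces $\tau^x$ are automatically ultraweakly continuous traces with $\tau^x(I(x))=1$ a.e., the only thing to rule out is that $F^x$ is finite-dimensional on a non-null set. So the real content is: for $\mu_\tau$-almost every $x$, $F^x$ is infinite-dimensional.

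First I would recall the classical argument that $L(G)$ is a $\II_1$ factor when $G$ is icc, and localize it. Suppose, toward a contradiction, that there is a central projection $p\in Z(L(G))$ with $\tau(p)>0$ such that the corner $pL(G)p$ decomposes into finite-dimensional factors of dimension at most $n^2$ on $\supp(p)$; by cutting down further we may assume the dimension is exactly $n^2$ on a non-null piece, i.e.\ $pL(G)p$ is (locally) a type $\I_n$ algebra. The key structural fact to exploit is that in a type $\I_n$ von Neumann algebra the minimal central projection pieces carry minimal projections of trace $\le 1/n$ (after normalization of $\tau^x$), and more importantly that such an algebra has \emph{finite-dimensional fibers}, so the set of "values" $u_g p$ for $g\in G$, viewed in $(pL(G)p,\|\cdot\|_\tau)$, cannot spread out too much. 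Concretely: the $u_g$, $g\in G$, are orthonormal in $\|\cdot\|_\tau$, hence $\{u_g p : g\in G\}$ is orthonormal in $L^2(pL(G)p,\tau)$; but I want to play this against conjugation. For $g\in G$ the inner automorphism $\mathrm{Ad}(u_g)$ fixes $Z(L(G))$, hence fixes $p$, and it permutes the $u_h$ by $u_h\mapsto u_{ghg^{-1}}$. If $g\notin\gf$ then $[g]_G$ is infinite, so the orbit $\{u_{hgh^{-1}}p : h\in G\}$ is an infinite orthonormal set inside the unit sphere of $L^2(pF p,\tau)$ that is permuted by a group of trace-preserving automorphisms — this already forces $pL(G)p$ to have infinite-dimensional $L^2$, contradicting the type $\I_n$ assumption (finite-dimensional factor $\Rightarrow$ finite-dimensional $L^2$). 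The place where $[G:\gf]=\infty$ enters is precisely to guarantee that for \emph{every} non-null central projection $p$ there exists such a $g$ with $u_g p\ne 0$ and $g\notin\gf$: since $\gf$ has infinite index, $G\setminus\gf$ is "large", and one has to check that $u_g p\ne 0$ for some such $g$ (this uses faithfulness of the trace together with the fact that $p$ is a sum/integral of identities, so $u_g p\ne 0$ whenever $p\ne 0$).

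Assembling the pieces: let $B\subseteq X$ be the (measurable) set on which $F^x$ is finite-dimensional, and let $p=\int_B I(x)\,d\mu_\tau(x)$ be the corresponding central projection; assume for contradiction $\tau(p)>0$. Decomposing further, we may assume there is $n$ with $\dim F^x=n^2$ for all $x\in B$. Pick $g\in G\setminus\gf$; then $\mathrm{Ad}(u_g)$ restricts to an automorphism of the corner $pL(G)p$ preserving the trace, and the family $(u_{hgh^{-1}}p)_{h\in G}$ is an infinite orthonormal subset of $L^2(pL(G)p,\tau)$. On the other hand $L^2(pL(G)p,\tau)=\int_B L^2(F^x,\tau^x)\,d\mu_\tau(x)$ and each $L^2(F^x,\tau^x)$ has dimension $n^2<\infty$; slicing the orthonormal family fiberwise and using a pigeonhole/measure-theoretic argument (on a non-null subset of $B$ infinitely many of the $u_{hgh^{-1}}p$ must have nonzero, mutually orthogonal fibers, impossible in an $n^2$-dimensional space) yields the contradiction. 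Hence $\tau(p)=0$, so $F^x$ is infinite-dimensional a.e., and being equipped a.e.\ with the ultraweakly continuous trace $\tau^x$ normalized to $1$, it is type $\II_1$.

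The step I expect to be the main obstacle is making the fiberwise pigeonhole argument rigorous: one needs to pass from the global statement "$(u_{hgh^{-1}}p)_{h}$ is orthonormal in $L^2(pL(G)p,\tau)$" to a measurable selection on a positive-measure set of finitely many fibers that are simultaneously $n^2+1$-dimensional, which requires a little care with measurable fields of Hilbert spaces and the disintegration of the inner product $\langle a,b\rangle_\tau=\int\langle a^x,b^x\rangle_{\tau^x}\,d\mu_\tau(x)$. An alternative, possibly cleaner, route that avoids the fiberwise bookkeeping: show directly that $pL(G)p$ contains a $\II_1$ subfactor (namely $S(\gf)$-type considerations, or the subalgebra generated by $\{u_{hgh^{-1}}:h\in G\}$ together with $p$, which is a corner of the group von Neumann algebra of the icc subgroup generated by the infinite conjugacy class), contradicting the type $\I_n$ assumption since a type $\I_n$ algebra contains no infinite-dimensional von Neumann subalgebra with a trace of the required form. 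I would likely present whichever of these the authors' later lemmata make shortest.
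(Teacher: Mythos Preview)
Your argument has a genuine gap at the step where you pass from orthonormality in $L(G)$ to orthonormality in the corner. The inference ``the $u_g$ are orthonormal in $\|\cdot\|_\tau$, hence $\{u_gp:g\in G\}$ is orthonormal in $L^2(pL(G)p,\tau)$'' is false: a short computation gives
\[
\langle u_a p,\,u_b p\rangle_\tau=\tau(u_{b^{-1}a}\,p),
\]
and for a nontrivial central projection $p\in Z(L(G))\subseteq S(\gf)$ this need not vanish when $b^{-1}a\in\gf$. In particular, for your family $(u_{hgh^{-1}}p)_{h\in G}$: since $\gf$ is normal, the image of $[g]_G$ in $G/\gf$ is the conjugacy class of $g\gf$ there, and whenever $G/\gf$ is abelian (which is perfectly compatible with $[G:\gf]=\infty$) all conjugates of $g$ lie in the single coset $g\gf$, so $b^{-1}a\in\gf$ for every pair of distinct conjugates. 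Thus the orthogonality you need can fail outright. Even granting global orthogonality in the corner, you correctly note that $L^2(pL(G)p,\tau)=\int_B L^2(F^x,\tau^x)\,d\mu_\tau(x)$ is typically infinite-dimensional, so an infinite orthonormal set there yields no contradiction; what is required is \emph{fiberwise} orthogonality, and your pigeonhole sketch does not produce it (global orthogonality of functions in a direct integral says nothing pointwise).

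The paper avoids both problems by working with coset representatives of $\gf$ rather than conjugates of a single element, and by proving fiberwise orthogonality directly. The two ingredients are: (i) $Z(L(G))\subseteq S(\gf)$, so every central projection $p$ satisfies $\langle p,u_h\rangle_\tau=0$ whenever $h\notin\gf$; and (ii) for $g\notin h\gf$ one has $\langle u_g^x,u_h^x\rangle_{\tau^x}=0$ for $\mu_\tau$-a.e.\ $x$. Item (ii) is proved by contradiction: if the fiberwise inner product were nonzero on a set of positive measure, one could cut down to a measurable set $B$ on which its real part is bounded away from $0$; the corresponding central projection $p=\int_B I(x)\,d\mu_\tau$ would then satisfy $\langle p,u_{g^{-1}h}\rangle_\tau\neq 0$, contradicting (i). With (ii) in hand, choosing an infinite system of coset representatives $(g_i)$ of $\gf$ gives, for a.e.\ $x$, an infinite orthonormal family $(u_{g_i}^x)_i$ of unitaries in $(F^x,\tau^x)$, forcing $F^x$ to be infinite-dimensional and hence type $\II_1$. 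The moral: the hypothesis $[G:\gf]=\infty$ is used to produce infinitely many \emph{cosets}, and it is membership in distinct cosets---not membership in a single infinite conjugacy class---that drives the orthogonality both globally in the corner and in almost every fiber.
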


To prove this, we need the following two lemmata.

\begin{lemma}\label{l.gf}
\leavevmode
\makeatletter
\@nobreaktrue
\makeatother

1) $Z(L(G))\subseteq S(\gf)$.

2) For any projection $p\in S(\gf)$ and $h\in G\setminus \gf$ we have
$$
\langle p,u_h\rangle_\tau=0.
$$
\end{lemma}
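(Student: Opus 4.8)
The plan is to prove both parts by exploiting the defining property of $\gf$: an element $g$ lies in $\gf$ exactly when its conjugacy class $[g]_G$ is finite. The key computational tool is the action of the group on $L(G)$ by conjugation, i.e.\ the automorphisms $\operatorname{Ad}(u_h)\colon a\mapsto u_h a u_h^{-1}$ for $h\in G$. Since $\lambda$ and $\rho$ commute, each $\operatorname{Ad}(u_h)$ is a trace-preserving $*$-automorphism of $L(G)$. On the orthonormal basis $(u_g)_{g\in G}$ of $(L(G),\|\cdot\|_\tau)$ it acts by $u_h u_g u_h^{-1}=u_{hgh^{-1}}$, so it simply permutes basis vectors along conjugacy classes.

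\smallskip
For part (2), I would fix a projection $p\in S(\gf)$ and $h\in G\setminus\gf$, and compute the Fourier coefficient $\langle p,u_h\rangle_\tau$. First I observe that $p\in S(\gf)$ forces all Fourier coefficients of $p$ to be supported on $\gf$: indeed $S(\gf)$ is the $\|\cdot\|_\tau$-closure (intersected with $L(G)$) of the span of $\{u_g: g\in\gf\}$, and the coefficient functional $a\mapsto\langle a,u_k\rangle_\tau=\tau(a u_k^{-1})$ is $\|\cdot\|_\tau$-continuous on bounded sets, hence vanishes on $S(\gf)$ whenever $k\notin\gf$. Since $h\notin\gf$, this already gives $\langle p,u_h\rangle_\tau=0$ — so in fact part (2) does not even use that $p$ is a projection, just that $p\in S(\gf)$ and $h\notin\gf$. (Here one must be slightly careful: $S(\gf)$ is the WOT-closure, not the $\|\cdot\|_\tau$-closure, of the span of $\{u_g:g\in\gf\}$; but $\tau$ is ultraweakly continuous, so $a\mapsto\tau(au_h^{-1})$ is WOT-continuous on bounded sets, and one applies Kaplansky density to reduce to the dense subalgebra where the claim is immediate.)

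\smallskip
For part (1), let $z\in Z(L(G))$; I must show $z\in S(\gf)$, i.e.\ that the Fourier coefficient $\langle z,u_k\rangle_\tau=\tau(zu_k^{-1})$ vanishes for every $k\notin\gf$. Because $z$ is central, $u_h z u_h^{-1}=z$ for all $h\in G$. Taking Fourier coefficients and using that $\operatorname{Ad}(u_h)$ permutes the basis as $u_g\mapsto u_{hgh^{-1}}$, the coefficient function $g\mapsto\langle z,u_g\rangle_\tau$ is constant on each conjugacy class $[g]_G$. But $\sum_{g\in G}|\langle z,u_g\rangle_\tau|^2=\|z\|_\tau^2<\infty$, so a coefficient that is constant along an \emph{infinite} conjugacy class must be $0$. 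Hence $\langle z,u_k\rangle_\tau=0$ whenever $[k]_G$ is infinite, i.e.\ whenever $k\notin\gf$, which is exactly the statement that $z\in S(\gf)$ (again after checking that having all Fourier coefficients supported on $\gf$ does put an element of $L(G)$ into $S(\gf)$, via the conditional expectation onto $S(\gf)$).

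\smallskip
The only genuine obstacle is the recurring subtlety that $L(G)$ is \emph{not} complete in $\|\cdot\|_\tau$, so ``has Fourier support in $\gf$'' and ``lies in $S(\gf)$'' must be linked with care. The clean way to handle this is the trace-preserving conditional expectation $E_{\gf}\colon L(G)\to S(\gf)$, which is characterized on Fourier coefficients by killing the $u_k$ with $k\notin\gf$; then $z\in S(\gf)$ iff $E_{\gf}(z)=z$ iff $\langle z-E_{\gf}(z),u_k\rangle_\tau=0$ for all $k$, reducing everything to the coefficient computations above. All other steps — that $\operatorname{Ad}(u_h)$ is a trace-preserving automorphism permuting the $u_g$, and the $\ell^2$-summability of Fourier coefficients — are routine.
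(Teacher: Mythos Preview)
Your proposal is correct and follows essentially the same approach as the paper. For part (1) the paper simply omits the argument, pointing to the standard proof that $L(G)$ is a factor for icc $G$, which is exactly the Fourier-coefficient argument you give (coefficients of central elements are constant on conjugacy classes and $\ell^2$-summable, hence vanish on infinite classes); for part (2) the paper likewise approximates $p$ ultraweakly by finite sums $\sum_{g\in\gf}\alpha_g u_g$ and uses ultraweak continuity of $\tau$, which is precisely your Kaplansky-density reduction, and indeed neither argument uses that $p$ is a projection.
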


\begin{proof}
\leavevmode
\makeatletter
\@nobreaktrue
\makeatother

1) We omit the proof, as this is \emph{exactly} what the standard proof (see e.g.~\cite{KR97}[Theorem 6.7.5]) of ``$L(G)$ is a factor when $G$ has infinite conjugacy classes (icc)'' actually shows.

2) By definition, $p\in S(\gf)$ can be approximated arbitrarily well in the ultraweak topology by finite sums $\sum_{g \in \gf} \alpha_g u_g$ for a sequence $\alpha_g \in \mathbb{C}$. So for $h \notin \gf$, the product $p u_{h^{-1}}$ is the ultraweak limit of finite sums of the form 
$$
su_{h^{-1}}=\sum_{g \in \gf} \alpha_g u_g u_{h^{-1}}=\sum_{g \in \gf} \alpha_g u_{gh^{-1}},
$$
Since none of these sums contain $u_{e}$ we get $\langle s,u_h\rangle_\tau=\tau(su_{h^{-1}})=0$, and so by continuity $\langle p,u_h\rangle_\tau=0$.
\end{proof}


\begin{lemma}\label{l.orth}
Let
  \[ L(G) = \int_{X} F^x  d  \mu_{\tau}(x) \]
be the factor decomposition of $L(G)$ (w.r.t.~the measure $\mu_\tau$ arising from the trace), let $\tau^x$ be the trace on $F^x$ (as in \Cref{s.bg}), and for each $g\in G$ decompose $u_g$ as
$$
u_g=\int u_g^x  d  \mu_\tau(x) \, ,
$$
where $u_g^x$ is a unitary in $F^x$. Let $g,h\in G$ with $g\notin h\gf$. Then
$$
\langle u_g^x,u_h^x\rangle_{\tau^x}=0
$$
for almost all $x$.
\end{lemma}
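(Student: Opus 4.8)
The plan is to reduce the claim to the second part of \Cref{l.gf} by a standard trace-integral argument. First I would reduce to the case $h = e$: since $g \notin h\gf$ is equivalent to $h^{-1}g \notin \gf$, and since $u_h^x$ is unitary in $F^x$ so that $\langle u_g^x, u_h^x\rangle_{\tau^x} = \tau^x(u_g^x (u_h^x)^*) = \tau^x(u_{h^{-1}g}^x) = \langle u_{h^{-1}g}^x, u_e^x\rangle_{\tau^x}$, it suffices to show that for any $k \in G \setminus \gf$ we have $\langle u_k^x, u_e^x\rangle_{\tau^x} = 0$ for almost every $x$, i.e.\ $\tau^x(u_k^x) = 0$ a.e.

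Next, the key move is to test this against central projections. For any measurable $B \subseteq X$, let $p_B = \int_B I(x)\, d\mu_\tau(x)$ be the corresponding central projection; then $p_B \in Z(L(G)) \subseteq S(\gf)$ by \Cref{l.gf}(1). Since $k \notin \gf$, \Cref{l.gf}(2) gives $\langle p_B, u_k\rangle_\tau = 0$. On the other hand, using that the trace decomposes as $\tau(a) = \int \tau^x(a^x)\, d\mu_\tau(x)$ and that $(p_B u_{k^{-1}})^x = \mathbf{1}_B(x) (u_k^x)^*$ a.e., we compute
$$
0 = \langle p_B, u_k\rangle_\tau = \tau(p_B u_{k^{-1}}) = \int_B \tau^x((u_k^x)^*)\, d\mu_\tau(x) = \int_B \overline{\tau^x(u_k^x)}\, d\mu_\tau(x).
$$
Since this holds for every measurable $B$, the function $x \mapsto \overline{\tau^x(u_k^x)}$ vanishes $\mu_\tau$-almost everywhere, hence $\tau^x(u_k^x) = 0$ for almost all $x$, which is what we wanted. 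Applying this with $k = h^{-1}g$ and running the first-paragraph reduction backwards yields $\langle u_g^x, u_h^x\rangle_{\tau^x} = 0$ a.e.

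The only genuine technical point — and the place I would be most careful — is the measurability bookkeeping in the decomposition: that $x \mapsto \tau^x(u_k^x)$ is measurable, that $(p_B u_{k^{-1}})^x = \mathbf{1}_B(x)(u_k^x)^*$ holds pointwise a.e. (so that the integrand genuinely restricts to $B$), and that the identity $\tau(a) = \int \tau^x(a^x)\,d\mu_\tau$ applies to the non-self-adjoint element $a = p_B u_{k^{-1}}$ (it does, by linearity, splitting into real and imaginary parts, or simply because the decomposition of the trace is linear in $a$). All of this is granted by the setup recalled in \Cref{s.bg}, so the argument is essentially the observation that \Cref{l.gf}(2) is the ``global'' shadow of the desired ``pointwise'' orthogonality, unwound through the integral decomposition of $\tau$.
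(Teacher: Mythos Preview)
Your proof is correct and follows essentially the same approach as the paper: both reduce the fiberwise orthogonality to \Cref{l.gf}(2) by pairing against central projections and using the integral decomposition of the trace. The only cosmetic difference is that the paper argues by contradiction (producing a specific set $B$ on which the real part of $\langle u_g^x,u_h^x\rangle_{\tau^x}$ is bounded away from zero), whereas you give the cleaner direct argument that $\int_B \overline{\tau^x(u_k^x)}\,d\mu_\tau = 0$ for all measurable $B$ forces the integrand to vanish a.e.
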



\begin{proof}
Let
\[ 
A=\{x \in X : \langle u_g^x, u_h^x\rangle_{\tau^x} \neq 0 \} \, . 
\]
We will show that $ \mu_\tau(A)=0$.

Suppose instead $\mu_\tau(A)>0$. Then for some $\delta>0$ the set
$$
A_\delta=\{x\in X:  \re(\langle u_g^x, u_h^x\rangle_{\tau^x}) >\delta\} \
$$
has positive measure, and so there is $r_0\in\mathbb{R}$ with $r_0>\delta$ such that the set
\[ B = \{x \in X : \lvert \re (\langle u_{g}^x, u_{h}^x \rangle_{\tau^x})  - r_0 \rvert < \frac \delta 2 \} \]
has positive $\mu_\tau$-measure. In $L(G)$, the set $B$ corresponds to the projection 
$$
p=\int_B I(x)d\mu_\tau(x)
$$
where $I(x)$ is the identity in $F^x$. As
 $p \in Z(L(G))$, by Lemma \ref{l.gf} we have $\langle p,u_{g^{-1}h}\rangle=0$. However, the definition of $B$ gives
$$
\re( \int_{B} \langle u_{g}^x, u_{h}^x \rangle_{\tau^x}  d  \mu_\tau(x))=\int_{B} \re(\langle u_{g}^x, u_{h}^x \rangle_{\tau^x})  d  \mu_\tau(x)>\frac\delta 2\mu_\tau(B),
$$
and so since
$$
\re(\langle p, u_{g^{-1}h} \rangle_\tau) =\re(\langle u_{g} p , u_{h} \rangle_\tau) =\re( \int_{B} \langle u_{g}^x, u_{h}^x \rangle_{\tau^x}  d  \mu_\tau(x))
$$
we get that $\langle p, u_{g^{-1}h} \rangle_\tau\neq 0$, which is a contradiction.
\end{proof}

\begin{proof}[Proof of \Cref{t.caseinf}]
Let $G$ be a countable discrete group, and suppose $[G:\gf]=\infty$. Let $(g_i)_{i \in \N}$ be a sequence in $G$ such that $g_i\gf$ enumerates all left cosets injectively. Let $F^x$ and $u_g^x$ be as in the statement of the previous lemma. The previous lemma gives that the unitaries $u_{g_i}^x \in F^x$ are orthogonal w.r.t.~$\langle\cdot,\cdot\rangle_{\tau_x}$ for almost all $x$. Therefore, almost every factor in the decomposition of $L(G)$ is infinite-dimensional, and thus of type II$_1$.
\end{proof}


\section{The ``hard'' case: $[G:\gf]<\infty$}\label{s.hard}

Throughout this section, $G$ denotes a countable discrete group. In the case when $\gf$ has finite index in $G$, \Cref{t.thoma} follows immediately from (B) of the following theorem.

\begin{theorem}\label{t.hard}
Let $G$ be a countable discrete group. 

{\rm (A)} Suppose $G$ contains a sequence of non-abelian pairwise commuting subgroups $(G_i)_{i\in\N}$ (i.e.,~ each $G_i$ is non-abelian, but for $i\neq j$ every element of $G_i$ commutes with every element of $G_j$). Then $L(G)$ is not of type I.

{\rm (B)} If $[G:\gf]<\infty$ and $G$ is not abelian-by-finite, then $G$ contains a sequence of non-abelian pairwise commuting subgroups $(G_i)_{i\in\N}$. Therefore, by {\rm (A)}, $L(G)$ is not type I, and so its factor decomposition contains a $\II_1$ factor.
\end{theorem}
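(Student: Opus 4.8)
The plan is to prove (A) and (B) separately, as the theorem is stated.

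For part (B), I would work entirely inside the group $\gf$. Since $[G:\gf]<\infty$, a standard argument shows that $\gf$ is itself not abelian-by-finite (an abelian finite-index subgroup of $\gf$ would be abelian-by-finite in $G$ as well, using that $[G:\gf]<\infty$), so we may as well assume $G=\gf$; that is, every conjugacy class of $G$ is finite. The key structural fact to exploit is: for each $g\in G$, the centralizer $\cen_G(g)$ has finite index (since $|[g]_G|=[G:\cen_G(g)]<\infty$). Starting from any non-abelian finitely generated subgroup $K_0$ of $G$ — which exists, for otherwise $G$ would be a direct limit of abelian groups and hence abelian — the centralizer $\cen_G(K_0)$ is a finite intersection of finite-index subgroups, hence of finite index. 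I would then argue that $\cen_G(K_0)$ cannot be abelian-by-finite either (if it were, one could combine a finite-index abelian subgroup of it with a finite-index abelian subgroup found among the remaining structure to contradict that $G$ is not abelian-by-finite — this step needs care, see below), so $\cen_G(K_0)$ contains a non-abelian finitely generated subgroup $K_1$, which commutes with $K_0$ by construction. Iterating, set $G_1=K_0$, pass to $\cen_G(G_1)$, pick $G_2=K_1$ inside it, pass to $\cen_G(G_1)\cap\cen_G(G_2)=\cen_G(G_1 G_2)$, and so on. Each $G_i$ is non-abelian, and $G_i\subseteq\cen_G(G_1\cdots G_{i-1})$ guarantees pairwise commutativity.

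For part (A), I would use the subalgebras $S(G_1\cdots G_N)$ of $L(G)$. Because the $G_i$ pairwise commute, $G_1\cdots G_N$ is (a quotient of) the direct product $G_1\times\cdots\times G_N$, and correspondingly $S(G_1\cdots G_N)$ should be generated by the $N$ commuting non-abelian subalgebras $S(G_i)$. The idea is that a non-abelian $S(G_i)$, in its factor decomposition, has a positive-measure set of factors of dimension at least $2$ (an abelian algebra has all one-dimensional factors; conversely, if almost every factor were $1$-dimensional the algebra would be abelian, contradicting that some $u_g,u_h$ with $g,h\in G_i$ don't commute — here \Cref{l.orth}-type orthogonality arguments give the non-triviality quantitatively). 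Stacking $N$ such commuting pieces multiplies dimensions: in the factor decomposition of $S(G_1\cdots G_N)$ the fiber factor at a.e.\ point contains a tensor product of the fiber factors of the $S(G_i)$, so its dimension is at least $2^{(\text{number of }i\le N\text{ with "large" fiber})}$. A Borel–Cantelli / averaging argument then shows that for $N$ large, on a set of measure at least $\tfrac12$ the fiber dimension exceeds any prescribed $k$. Letting $k\to\infty$, $L(G)$ contains (a corner of) an infinite-dimensional finite von Neumann algebra with faithful trace, i.e.\ a $\II_1$ factor must appear in the decomposition of $L(G)$, so $L(G)$ is not of type I; and the last sentence of (B) follows since a finite von Neumann algebra that is not type I must have a $\II_1$ factor in its decomposition.

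The main obstacle, I expect, is the bookkeeping in part (B): ensuring at each stage that the centralizer $\cen_G(G_1\cdots G_N)$ is still \emph{not} abelian-by-finite, so that the induction can continue. One must rule out that, after finitely many steps, the centralizer collapses to an abelian-by-finite group while $G$ itself is not. The clean way is probably to observe that $[G:\cen_G(G_1\cdots G_N)]<\infty$ at every stage (finite intersection of finite-index centralizers), so an abelian finite-index subgroup of the centralizer would be an abelian finite-index subgroup of $G$ — contradiction. A secondary subtlety in part (A) is making the "tensor product of fibers" precise: $S(G_1\cdots G_N)$ need not be the algebraic tensor product because the product map $G_1\times\cdots\times G_N\to G$ may have kernel, but since the generators commute one still gets that the fiber factor is generated by $N$ commuting sub-factors, and the orthogonality of $\{u_g^x : g\in G_i\}$ for the relevant $g$'s (from \Cref{l.orth} applied within $G_i$, or a direct computation) forces the fiber dimensions to multiply; I would isolate this as a lemma rather than grind it inline.
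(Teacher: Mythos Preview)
Your proposal is correct and follows essentially the same route as the paper for both parts. For (B), the paper iterates inside $\gf$ exactly as you describe, phrasing the finite-index centralizer as the kernel of the conjugation action on a finite conjugation-closed set; for (A), the paper sharpens your ``positive measure'' to the explicit bound $\mu_\tau(\{x:F^x\not\simeq\C\})\geq\tfrac12$ (via $\|u_{gh}-u_{hg}\|_\tau^2=2$ when $gh\neq hg$), isolates the ``dimensions multiply'' step as a matrix-unit lemma just as you suggest, and then uses a pigeonhole on the increments $\mu_\tau(B_N\setminus\bigcup_{i<N}B_i)$ rather than a Borel--Cantelli argument to drive the fiber dimension to infinity on a set of measure $\geq\tfrac12$.
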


\subsection{Proof of Part (A) of \Cref{t.hard}}

The proof is developed in a sequence of lemmata.

\begin{lemma}\label{l.nonab}
Let $H<G$ be a subgroup, and let
$$
S(H)=\int F^xd\mu_\tau(x)
$$
be the factor decomposition of $S(H)$. Suppose $H$ is not abelian. Then the set
$$
B=\{x: F^x\not\simeq \C\}
$$
has $\mu_\tau(B)\geq\frac 1 2$.
\end{lemma}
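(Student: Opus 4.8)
The plan is to argue by contradiction: suppose $\mu_\tau(B) < \tfrac12$, where $B = \{x : F^x \not\simeq \C\}$. Then the central projection $q$ corresponding to the complement $X \setminus B$ — i.e.\ the set on which $F^x \simeq \C$ — satisfies $\tau(q) > \tfrac12$. On the corner $q S(H) q$, every factor $F^x$ is one-dimensional, so $q S(H) q$ is abelian. The strategy is to leverage this to force $H$ itself to be abelian, contradicting the hypothesis.

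First I would observe that, since $H$ is not abelian, there are $h_1, h_2 \in H$ with $h_1 h_2 \neq h_2 h_1$, equivalently $u_{h_1} u_{h_2} \neq u_{h_2} u_{h_1}$ in $S(H) \subseteq L(G)$. Writing $u_{h_j} = \int u_{h_j}^x \, d\mu_\tau(x)$, the commutator $[u_{h_1}, u_{h_2}] = u_{h_1}u_{h_2} - u_{h_2}u_{h_1}$ decomposes as $\int [u_{h_1}^x, u_{h_2}^x]\, d\mu_\tau(x)$, and this is nonzero as an element of $S(H)$, so $[u_{h_1}^x, u_{h_2}^x] \neq 0$ on a set of positive $\mu_\tau$-measure. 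But for $x \in X \setminus B$ the factor $F^x \simeq \C$ is commutative, so $[u_{h_1}^x, u_{h_2}^x] = 0$ there; hence the set $\{x : [u_{h_1}^x, u_{h_2}^x] \neq 0\}$ is contained in $B$ up to null sets, giving $\mu_\tau(B) > 0$. To push this to $\mu_\tau(B) \geq \tfrac12$ rather than merely $> 0$, I would quantify more carefully: the key is to show that $\{x : u_{h_1}^x u_{h_2}^x = u_{h_2}^x u_{h_1}^x\}$, which contains $X\setminus B$, must actually be \emph{null}. For this I would use faithfulness of the trace together with \Cref{l.orth}: the point is that the family $(u_g^x)_{g \in H}$ is orthonormal in $(F^x, \|\cdot\|_{\tau^x})$ on a conull set — indeed $\langle u_g^x, u_{g'}^x\rangle_{\tau^x} = 0$ a.e.\ whenever $gH^{\fin} \neq g'H^{\fin}$, and one checks that for a non-abelian $H$ one can choose $h_1, h_2$ so that $h_1 h_2 H^{\fin} \neq h_2 h_1 H^{\fin}$ — wait, this may fail, so instead I would directly compute $\|u_{h_1}^x u_{h_2}^x - u_{h_2}^x u_{h_1}^x\|_{\tau^x}^2$ on $X \setminus B$, where unitaries in $F^x \simeq \C$ are scalars of modulus $1$, forcing the commutator to vanish identically there, and then invoke that $q[u_{h_1},u_{h_2}]q = \int_{X\setminus B}[u_{h_1}^x,u_{h_2}^x]\,d\mu_\tau(x) = 0$.

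The cleanest route, and the one I would actually take, reframes everything in terms of projections and Murray--von Neumann equivalence. Since $H$ is non-abelian, pick $h_1, h_2 \in H$ non-commuting. The element $a = u_{h_1}u_{h_2}u_{h_1}^{-1}u_{h_2}^{-1} - I \in S(H)$ is a nonzero element with $a^x = 0$ for $x \in X \setminus B$. Thus $a = (I - q) a (I-q)$, so $\operatorname{supp}$ of the projection onto the closure of the range of $aa^*$ — equivalently, the smallest central projection $p$ with $pa = a$ — satisfies $p \leq I - q$, hence $\tau(p) \leq 1 - \tau(q) = \mu_\tau(B)$. If $\mu_\tau(B) < \tfrac12$ this only says $\tau(p) < \tfrac 12$; the factor-of-$\tfrac12$ in the statement must therefore come from a \emph{symmetry} argument rather than from the commutator alone. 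The mechanism I expect is: $h_1 h_2 h_1^{-1} h_2^{-1} =: k \neq e$, and the regular representation structure forces $u_k^x \neq I$ on a set of measure \emph{exactly} $\mu_\tau(B_k)$ where $B_k \supseteq$ (conull subset of) $B$, but crucially by \Cref{l.orth} applied to $u_k$ and $u_e = I$: if $k \notin H^{\fin}$ then $\langle u_k^x, I\rangle_{\tau^x} = 0$ a.e., so $u_k^x \neq I$ a.e., giving $\mu_\tau(B) = 1 \geq \tfrac12$ trivially; the genuinely delicate case is $k \in H^{\fin}$, where $\langle u_k^x, I\rangle_{\tau^x} = \tau^x(u_k^x)$ need not vanish, but then $[k]_H$ is finite, and averaging $u_g^x u_k^x u_g^{-1,x}$ over a transversal of $[k]_H$ produces a central element of $F^x$, which must be a scalar; tracking the modulus of this scalar forces $u_k^x \neq I$ on at least half of $X$.

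The main obstacle, then, is precisely pinning down why the bound is $\tfrac12$ and not something smaller: this requires identifying the right finite group acting on each factor (the quotient $H / C$ where $C$ is an appropriate central/finite-index piece witnessing non-commutativity) and using a counting or orthogonality argument — most likely that a nontrivial irreducible representation of a finite group has no fixed vectors, combined with the observation that the "trivial part" of $u_{h_2}^x$ conjugated by $u_{h_1}^x$ occupies at most half the mass when $h_1, h_2$ don't commute. I would structure the final proof around choosing $h_1, h_2 \in H$ non-commuting, setting $k = [h_1, h_2]$, and showing $\{x : u_k^x = I\}$ has measure $\leq \tfrac12$ via the averaging/orthogonality argument sketched above; this set contains $X \setminus B$ up to null sets, yielding $\mu_\tau(X \setminus B) \leq \tfrac 12$, i.e.\ $\mu_\tau(B) \geq \tfrac12$, as desired.
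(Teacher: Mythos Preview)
Your proposal circles around the right objects but never lands on the one-line computation that actually produces the $\tfrac12$. You correctly note that for non-commuting $h_1,h_2\in H$ the element $u_{h_1}u_{h_2}-u_{h_2}u_{h_1}=u_{h_1h_2}-u_{h_2h_1}$ vanishes fiberwise on $X\setminus B$, since $F^x\simeq\C$ there. The step you are missing is that the global $\tau$-norm of this element is known \emph{exactly}: because $h_1h_2\neq h_2h_1$ are distinct group elements, $u_{h_1h_2}$ and $u_{h_2h_1}$ are orthonormal in $\langle\cdot,\cdot\rangle_\tau$, so $\|u_{h_1h_2}-u_{h_2h_1}\|_\tau^2=2$. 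On the other hand each $u_g^x$ is a $\tau^x$-unit vector, so pointwise $\|u_{h_1h_2}^x-u_{h_2h_1}^x\|_{\tau^x}\le 2$. Integrating,
\[
2=\int_X\|u_{h_1h_2}^x-u_{h_2h_1}^x\|_{\tau^x}^2\,d\mu_\tau
=\int_B\|u_{h_1h_2}^x-u_{h_2h_1}^x\|_{\tau^x}^2\,d\mu_\tau
\le 4\,\mu_\tau(B),
\]
and $\mu_\tau(B)\ge\tfrac12$ follows. This is precisely the paper's argument.

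Your detours are either inapplicable or aim at the wrong target. \Cref{l.orth} concerns the factor decomposition of $L(G)$ relative to cosets of $\gf$, not the decomposition of $S(H)$, so invoking it here is not justified. The attempt to show that $u_{h_1}^x$ and $u_{h_2}^x$ fail to commute \emph{almost everywhere} is too strong and can genuinely fail (think of $H$ a direct product of a non-abelian group with an abelian one). And the averaging-over-conjugates scheme at the end is chasing a mechanism that isn't there: the constant $\tfrac12$ does not come from any symmetry or representation-theoretic cancellation, but simply from the ratio of the exact global norm-squared $2$ to the trivial pointwise bound $4$.
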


\begin{proof}
Let $g,h\in H$ be non-commuting elements of $S(H)$. Since $gh\neq hg$, we have that $u_{gh}$ and $u_{hg}$ are orthonormal vectors w.r.t.~$\langle\cdot,\cdot\rangle_\tau$, and so $\|u_{gh} - u_{hg} \|_\tau^2 =  \|u_{gh}\|_\tau^2+\|u_{hg}\|_\tau^2=2$. Thus
\begin{equation}\label{eq.intt}
2=\|u_{gh}-u_{hg}\|^2=\int_{B^c} \|u_{gh}^x-u_{hg}^x\|^2d\mu_\tau+\int_{B} \|u_{gh}^x-u_{hg}^x\|^2d\mu_\tau
\end{equation}
Since for $x\in B^c$ the factor $F^x$ is commutative (being isomorphic to $\C$), we have
$$
u_{gh}^x=u_g^xu_h^x=u_h^x u_g^x=u_{hg}^x.
$$
Thus the first integral above is 0. On the other hand, since $u_{gh}^x$ and $u_{hg}^x$ are unit vectors in $\langle\cdot,\cdot\rangle_{\tau_x}$, we have $\|u_{gh}^x-u_{hg}^x\|\leq 2$, and so the second integral above can be at most $4\mu_\tau(B)$. Thus \cref{eq.intt} gives $2\leq 4\mu_\tau(B)$, so $\mu_\tau(B)\geq\frac 1 2$ follows.
\end{proof}

Before the next proof, we remark that when $H_0,H_1<G$ are commuting subgroups, then $H_0H_1=\{h_0h_1: h_0\in H_0\wedge h_1\in H_1\}$ is a subgroup of $G$.

\begin{Lem}\label{l.tensor}
Suppose that $H_0,H_1 < G$ are commuting subgroups. Let $S(H_i)=\int F^x_i d\mu_i(x)$ and $S(H_0H_1)=\int F^x d\mu(x)$ be the factor decompositions.
Let $p_i \in S(H_i)$, $i\in\{0,1\}$, be central projections. Then

\begin{enumerate}
\item $p_0p_1$ is a central projection in $S(H_0H_1)$.
\item If there are $n_0,n_1\in\N$ such that $F^x_i$ contains a unital copy of $M_{n_i}(\C)$ for almost all $x\in\supp(p_i)$, then for almost all $x\in\supp(p_0p_1)$ the factor $F^x$ contains a copy of $M_{n_0n_1}(\C)$.
\item If in (2) we have $F^x_i\simeq M_{n_i}(\C)$ for almost all $x\in\supp(p_i)$, then  
$$
(\forall^\mu x\in\supp(p_0p_1))\ F^x\simeq M_{n_0n_1}(\C).
$$
\end{enumerate}
\end{Lem}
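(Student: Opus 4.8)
The plan is to reduce everything to the basic observation that when $H_0$ and $H_1$ commute, the unitaries $u_{h_0}$ and $u_{h_1}$ commute in $L(G)$, so that $S(H_0H_1)$ is, in a natural sense, generated by the two commuting subalgebras $S(H_0)$ and $S(H_1)$; morally it is their tensor product. I would first establish (1): since $p_i \in Z(S(H_i))$ and the two algebras commute elementwise, $p_0$ commutes with every $u_{h_1}$ and $p_1$ commutes with every $u_{h_0}$, hence $p_0p_1$ is a projection (the $p_i$ commute, and both are self-adjoint idempotents) lying in $S(H_0H_1)$ and commuting with all generators $u_{h_0h_1} = u_{h_0}u_{h_1}$, so $p_0p_1 \in Z(S(H_0H_1))$. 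This is the easy part.

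For (2), I would work in the corners. Replacing $S(H_i)$ by the corner $p_i S(H_i) p_i$ and $S(H_0H_1)$ by $p_0p_1 S(H_0H_1) p_0p_1$, and passing to the factor decompositions over $\supp(p_i)$ and $\supp(p_0p_1)$, the hypothesis gives a measurable field $x \mapsto N_i^x \subseteq F_i^x$ with $N_i^x \cong M_{n_i}(\C)$ (unital) for a.e.\ $x$. The key step is to transport a unital copy of $M_{n_i}(\C)$ from (a.e.\ fiber of) the decomposition of $S(H_i)$ into (a.e.\ fiber of) the decomposition of $S(H_0H_1)$. I would do this by a system of matrix units: a unital copy of $M_{n_i}(\C)$ in $F_i^x$ is the same as a family $(e^{(i)}_{jk}(x))_{1 \le j,k \le n_i}$ of matrix units summing to the identity, and by measurable selection these can be chosen to depend measurably on $x$, hence assemble into matrix units $e^{(i)}_{jk} \in p_i S(H_i) p_i \subseteq p_i S(H_0H_1) p_i$ (this last inclusion uses $S(H_i) \subseteq S(H_0H_1)$, which holds because $H_i < H_0H_1$). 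Now $e^{(0)}_{jk}$ and $e^{(1)}_{lm}$ commute — since $S(H_0)$ and $S(H_1)$ commute elementwise inside $L(G)$ — and one checks routinely that the products $e^{(0)}_{jk}e^{(1)}_{lm}$ form a system of $n_0^2 n_1^2$ matrix units inside $p_0p_1 S(H_0H_1)$ with unit $p_0p_1$, i.e.\ a unital copy of $M_{n_0n_1}(\C)$ in the corner. Decomposing this corner into the factors $F^x$, $x \in \supp(p_0p_1)$, and noting that the central support of the whole matrix-unit system is all of $p_0p_1$ (the unit of the corner), one gets that $M_{n_0n_1}(\C)$ embeds unitally into $F^x$ for a.e.\ $x \in \supp(p_0p_1)$.

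For (3), the extra hypothesis $F_i^x \cong M_{n_i}(\C)$ means that, after passing to corners, $p_i S(H_i) p_i \cong L^\infty(\supp(p_i)) \,\overline{\otimes}\, M_{n_i}(\C)$, a homogeneous type $\I_{n_i}$ algebra. One then argues that $p_0p_1 S(H_0H_1) p_0p_1$ is generated by the two commuting homogeneous algebras $p_0 S(H_0) p_0$ and $p_1 S(H_1) p_1$, together with the common center; using that commuting homogeneous $\I_{n_0}$ and $\I_{n_1}$ algebras over a common base generate a homogeneous $\I_{n_0n_1}$ algebra, one concludes that $p_0p_1 S(H_0H_1) p_0p_1$ is homogeneous of type $\I_{n_0n_1}$, which is exactly the statement that $F^x \cong M_{n_0n_1}(\C)$ for a.e.\ $x \in \supp(p_0p_1)$. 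Alternatively, and perhaps more cleanly, one combines (2) with an upper bound: since $p_0p_1 S(H_0H_1) p_0p_1$ is contained in the type $\I_{n_0}$ algebra $p_1(p_0 S(H_0) p_0)'p_1 \cap \cdots$ — more simply, $S(H_0H_1) \subseteq S(H_0)'' \cdot S(H_1)'' $ forces $F^x$ to be a quotient-type piece no larger than $M_{n_0}(\C) \otimes M_{n_1}(\C)$ — so $\dim F^x \le n_0^2 n_1^2$, and together with the embedding $M_{n_0n_1}(\C) \hookrightarrow F^x$ from (2) this forces $F^x \cong M_{n_0n_1}(\C)$ a.e.

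The main obstacle I anticipate is the measurability bookkeeping in part (2): ensuring that the copies of $M_{n_i}(\C)$ in the fibers, and in particular the matrix units, can be chosen measurably in $x$ so that they genuinely come from elements of the algebras $S(H_i)$ (not just from abstract measurable fields), and then tracking what happens to the central supports when one re-decomposes the corner of $S(H_0H_1)$. Once the matrix units are in hand as honest operators in $S(H_0H_1)$, the algebraic verification that their products form a copy of $M_{n_0n_1}(\C)$ is purely formal. The upper-bound half of (3) also requires a little care in identifying the ``largest possible'' fiber, but this should follow from the standard fact that $\pi(G_0G_1)' \supseteq$ the von Neumann algebra generated by $\pi(G_0)'$ and $\pi(G_1)'$ together with some commutation-theorem input, or can be sidestepped entirely by the homogeneity argument.
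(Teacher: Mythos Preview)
Your proposal is correct and follows essentially the same route as the paper: part (1) via commutation of generators, part (2) via measurably selected matrix units $e^{(i)}_{jk}$ in $p_iS(H_i)$ whose products $e^{(0)}_{jk}e^{(1)}_{lm}$ form an $n_0n_1\times n_0n_1$ system in $p_0p_1S(H_0H_1)$, and part (3) via the observation that these matrix units together with central projections generate the corner. The paper cites Jankov--von Neumann uniformization for the measurable selection you anticipated, and handles the ``central support equals $p_0p_1$'' step you flagged by noting that the diagonal projections $pe^{(0)}_{kk}e^{(1)}_{mm}$ are pairwise Murray--von Neumann equivalent and sum to $p$, hence none can vanish under a nonzero central $p\leq p_0p_1$; your dimension-upper-bound alternative for (3) is not the route taken and, as you suspected, would need more work to make precise.
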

\begin{proof}
(1) Since $p_0$ and $p_1$ commute, it is clear that $p_0p_1$ is a projection, and since each $p_i$ commutes with all $u_g$ when $g\in H_0\cup H_1$, it follows that $p_0p_1$ is a central projection in $S(H_0H_1)$.

\medskip

(2) We may assume that $p_0p_1\neq 0$, otherwise there is nothing to show. By the Jankov-von Neumann uniformization Theorem \cite[Theorem 18.1]{Kechris95}, we can find measurable functions $x\mapsto e^{x,i}_{jk}$, where $x\in\supp(p_i)$, such that $(e^{x,i}_{jk})_{1\leq j,k\leq n_i}$ is a system of (non-zero) matrix units (see \cite[p.~159]{Blackadar06} or \cite[pp.~109--110]{RoLaLa00}) in $F^x_i$. Clearly
$$
e^i_{jk}=\int e^{x,i}_{jk}d\mu_i(x)
$$
is then a set of $n_i\times n_i$ matrix units in $p_i S(H_i)$.

\medskip

{\sc Claim.} Let $p\in Z(S(H_0H_1))$ be a central projection with $0\neq p\leq p_0p_1$. Then, for $1\leq j,k,a\leq n_0$ and $1\leq l,m,b\leq n_1$,
\begin{enumerate}[(a)]
\item $p e^0_{jk}e^1_{lm}\neq 0$;
\item $(p e^0_{ja}e^1_{lb})(p e^0_{ak}e^1_{bm})=pe^0_{jk}e^1_{lm}$;
\item $(p e^0_{jk}e^1_{lm})^*=p e^0_{kj}e^1_{ml}$.
\end{enumerate}
Thus
$$
\{p e^0_{jk}e^1_{lm}: 1\leq j,k\leq n_0\wedge 1\leq l,m\leq n_1\}
$$ 
forms a system of $n_0n_1\times n_0n_1$ matrix units.

\medskip

{\it Proof of Claim.} (b) and (c) follow by direct calculation from $(e^0_{jk})_{1\leq j,k\leq n_0}$ and $(e^1_{lm})_{1\leq l,m\leq n_1}$ being matrix units. To see (1) it is enough to observe that
$$
p_{km}=(pe^0_{jk}e^1_{lm})^*(pe^0_{jk}e^1_{lm})=pe^0_{kk}e^1_{mm}\neq 0 \, .
$$
Since all the projections $p_{km}$ are Murray-von Neumann equivalent by (b) and (c) above, it follows that if one of them is equal to $0$ then all of them must be equal to $0$. Since
$$
\sum_{k,m} p_{km}=\sum_{k,m} pe^0_{kk}e^1_{mm}=p \, ,
$$
not all $p_{km}$ can be zero.
\hfill $_{\text{Claim.}}\dashv$

\medskip

The previous claim clearly gives that $F^x$ contains a system of non-zero $n_0n_1\times n_0n_1$ matrix units for almost every $x\in\supp(p_0p_1)$.

\medskip

(3) We will not need this part of the lemma. All the same, to see that $F^x\simeq M_{n_0n_1}(\C)$ under the additional assumptions of (3), we just need to show that the corner $pS(H_0H_1)$ is generated by the set
$$
\{qe^0_{jk}e^1_{lm}: 1\leq j,k\leq n_0\wedge 1\leq l,m\leq n_1\wedge q\in Z(S(H_0H_1))\wedge q\leq p\}.
$$
This follows since, by the definition of the $e^i_{jk}$, the corner $pS(H_i)$ is generated by
$$
\{pe^i_{jk}: 0\leq i\leq 1\wedge 1\leq j,k\leq n_i\}\cup \{q\in Z(S(H_i)): q\leq p\},
$$
and it is clear that $Z(S(H_i))\subseteq Z(S(H_0H_1))$, and $pS(H_0H_1)$ is generated by $pS(H_0)\cup pS(H_1)$. \end{proof}

The next two lemmata show how \Cref{l.nonab} and \ref{l.tensor} together can be used to achieve dimension growth in the factor decomposition in the presence of a sequence of commuting non-abelian subgroups.

\begin{lemma}\label{l.estimate}
Let $1>\varepsilon>0$, $k>1$. Let $G$ be a group which contains a sequence of commuting subgroups $(G_i)_{i\in\N}$, and suppose the factor decompositions $S(G_i)=\int_{X_i} F^x_id\mu_i(x)$ satisfy that the sets
$$
B_i=\{x: \dim(F^x_i)\geq k\}
$$
have $\mu_i(B_i)>\frac 1 2  -\varepsilon$.
Then there is $N\in\N$ such that the factor decomposition $S(G_0\cdots G_N)=\int F^xd\mu(x)$ satisfies
$$
\mu_\tau(\{x: \dim(F^x)\geq k^2\})>\frac 1 2 -\varepsilon.
$$
\end{lemma}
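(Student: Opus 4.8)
The plan is to realise the $S(G_i)$ as commuting subalgebras of $S(G_0\cdots G_N)$, combine them with \Cref{l.tensor}, and then finish by a first--moment estimate in which the decisive surplus over $\tfrac12-\varepsilon$ is provided by \Cref{l.nonab}. First note that each $G_i$ is non-abelian: if $G_i$ were abelian then $S(G_i)$ would be abelian and $B_i$ empty, contradicting $\mu_i(B_i)>\tfrac12-\varepsilon>0$ (here $0<\varepsilon<1$). Hence \Cref{l.nonab} applies to $G_i$, and since a factor has linear dimension $1$ or $\geq 4$, the set $C_i=\{x\in X_i:\dim F^x_i\geq 4\}$ satisfies $\mu_i(C_i)\geq\tfrac12$ (throughout, $\mu_i$ is the trace measure, as always).

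Fix $N$, to be chosen large, and write $S(G_0\cdots G_N)=\int_X F^x\,d\mu_\tau(x)$. For $i\leq N$ let $q_i\in Z(S(G_i))$ be the central projection with $\supp(q_i)=B_i$. Since every $G_i$ commutes with every $G_j$, we have $S(G_i)\subseteq S(G_0\cdots G_N)$ and $Z(S(G_i))\subseteq Z(S(G_0\cdots G_N))$, so each $q_i$ is central in $S(G_0\cdots G_N)$, and the inclusion of centres yields a measure-preserving surjection $\pi_i\colon X\to X_i$ (cf.\ the discussion of inclusions of abelian von Neumann algebras in \Cref{s.bg}). Put $D_i(x)=\dim F^{\pi_i(x)}_i\in[1,\infty]$, so that $\int_X g(D_i)\,d\mu_\tau=\int_{X_i}g(\dim F^x_i)\,d\mu_i$ for every Borel $g\geq 0$. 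The geometric heart is the pointwise bound
\[
\dim F^x\ \geq\ \prod_{i=0}^{N}D_i(x)\qquad(\mu_\tau\text{-a.e. }x),
\]
proved by induction on $N$ from \Cref{l.tensor}(2) applied to the commuting pair $G_0\cdots G_{N-1}$ and $G_N$: partition $X$ (and $X_N$) into the countably many measurable pieces on which $\dim F^{\,\cdot}_{G_0\cdots G_{N-1}}$ (resp.\ $\dim F^{\,\cdot}_{G_N}$) equals a fixed finite $m^2$, plus the piece where it is infinite; on each such piece the relevant factor contains a \emph{unital} copy of $M_m(\mathbb{C})$ measurably in $x$ (via Jankov--von Neumann, exactly as in the proof of \Cref{l.tensor}(2)), so \Cref{l.tensor}(2) produces a copy of $M_{mm'}(\mathbb{C})$ in $F^x$ on the corresponding sub-piece, giving $\dim F^x\geq m^2(m')^2=\dim F^{\,\cdot}_{G_0\cdots G_{N-1}}\cdot\dim F^{\,\cdot}_{G_N}$; now multiply by the inductive bound.

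For the counting, set $\ell_i(x)=\min(\log D_i(x),\log k)\in[0,\log k]$ and $H=\sum_{i=0}^{N}\ell_i$, so $0\leq H\leq(N+1)\log k$ and, because $\sum_i\log D_i(x)=\log\prod_iD_i(x)\leq\log\dim F^x$,
\[
\{x:H(x)\geq 2\log k\}\ \subseteq\ \{x:\dim F^x\geq k^2\}.
\]
Write $c=\tfrac12-\varepsilon$ and $\gamma=\varepsilon\min(\log k,\log 4)>0$. Using $\ell_i=\log k$ on $\pi_i^{-1}(B_i)$, $\ell_i\geq\log 4$ on $\pi_i^{-1}(C_i)$, $\mu_\tau(\pi_i^{-1}(B_i))=\mu_i(B_i)\geq c$ and $\mu_\tau(\pi_i^{-1}(C_i))=\mu_i(C_i)\geq\tfrac12=c+\varepsilon$, a short case check (according to whether $k\leq 4$ or $k>4$, and whether $\mu_i(B_i)\leq\mu_i(C_i)$) gives $\int_X\ell_i\,d\mu_\tau\geq c\log k+\gamma$ for each $i$. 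Hence, with $p=\mu_\tau(\{H\geq 2\log k\})$, splitting $\int_XH\,d\mu_\tau$ over $\{H<2\log k\}$ and its complement yields
\[
(N+1)(c\log k+\gamma)\ \leq\ \int_X H\,d\mu_\tau\ \leq\ 2\log k\,(1-p)+(N+1)\log k\cdot p ,
\]
so $p\geq\dfrac{(N+1)(c+\gamma/\log k)-2}{N-1}\longrightarrow c+\gamma/\log k>c$ as $N\to\infty$. Choosing $N$ large enough gives $\mu_\tau(\{\dim F^x\geq k^2\})\geq p>\tfrac12-\varepsilon$, as required.

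The delicate point is precisely this last estimate. The naive approach — keeping only the weaker bound $\dim F^x\geq k^{\#\{i\leq N:\,x\in\supp q_i\}}$ and trying to bound the $\mu_\tau$-measure of the set where at least two of the $\supp q_i$ overlap — fails, because the corresponding first-moment inequality only produces a quantity tending to $\tfrac12-\varepsilon$ \emph{from below}, and this is genuinely unimprovable: two sets of measure just above $\tfrac12-\varepsilon$ can be disjoint. What saves the argument is the \emph{fixed} positive gain $\gamma$ in $\int\ell_i\geq c\log k+\gamma$, which is available exactly because \Cref{l.nonab} supplies, for every $i$, a region of measure $\geq\tfrac12=c+\varepsilon$ (strictly more than the bare $c$ coming from $B_i$) on which $D_i\geq 4$; this extra $\varepsilon$ of mass, amplified by $\min(\log k,\log 4)$ and summed over the $N+1$ indices, is what survives the passage to the limit in $N$.
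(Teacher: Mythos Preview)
Your argument is correct, but it takes a considerably more elaborate route than the paper's. The paper observes that the centres $Z(S(G_i))$ and $Z(S(G_0\cdots G_N))$ all sit inside $Z(S(G_\infty))$ (where $G_\infty=\langle\bigcup_i G_i\rangle$), so the $B_i$ may be regarded as subsets of one common probability space. Since $\mu_\tau\bigl(\bigcup_{i<N}B_i\bigr)$ is increasing and bounded by $1$, the increments $\mu_\tau\bigl(B_N\setminus\bigcup_{i<N}B_i\bigr)$ are summable, hence eventually small; so for some $N$ the intersection $B_N\cap\bigcup_{i<N}B_i$ has measure close to $\mu_\tau(B_N)>\tfrac12-\varepsilon$. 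A single application of \Cref{l.tensor}(2) with $H_0=G_0\cdots G_{N-1}$ and $H_1=G_N$ then gives $\dim F^x\geq k\cdot k=k^2$ on that intersection. No logarithms, no Markov inequality, and no appeal to \Cref{l.nonab}.

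Your closing paragraph is half right. A \emph{first-moment} argument using only the $B_i$ indeed tends to $\tfrac12-\varepsilon$ from below; but the paper's argument is a pigeonhole on the increments, not a first-moment bound, and this does work. What your detour through \Cref{l.nonab} genuinely buys is the \emph{strict} inequality: the pigeonhole, read literally, only yields $\mu_\tau\bigl(B_N\cap\bigcup_{i<N}B_i\bigr)\geq\tfrac12-\varepsilon$ in general (take $B_i=[0,\tfrac12-\varepsilon]\cup I_i$ with the $I_i$ disjoint and small), whereas your fixed surplus $\gamma>0$ forces a strict gap. For the application in \Cref{l.dimgrowth} this distinction is immaterial, since one can shrink $\varepsilon$ at the outset; but it does explain why you felt extra input was required. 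One small slip: you infer $\tfrac12-\varepsilon>0$ from $0<\varepsilon<1$, which is false; simply note that the lemma is trivial when $\varepsilon\geq\tfrac12$ and assume $\varepsilon<\tfrac12$ from the start.
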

\begin{proof}
Let $G_\infty=\langle\bigcup_{i\in\N} G_i\rangle)$, i.e.,~$G_{\infty}$ is the group generated by $\bigcup_{i\in\N} G_i$. Notice that the center of $S(G_i)$ and of $S(G_1\cdots G_N)$ are contained in the center of $S(G_\infty)$, so we may consider all the sets $B_i$ as belonging to the same probability space, namely the probability space corresponding to $Z(S(G_\infty)$), with the measure $\mu_\tau$ derived from the trace. By the previous lemma, it holds that for any $N\in\N$ the factor decomposition $S(G_1\cdots G_N)=\int F^xd\mu_\tau(x)$ satisfies
$$
\{x: \dim(F^x)>k\}\supseteq\bigcup_{i\leq N} B_i.
$$
As $N\mapsto \mu_\tau(\bigcup_{i<N} B_i)$ is bounded by 1, there must be some $N$ such that $\mu_\tau(B_{N}\setminus \bigcup_{i< N} B_i)<\mu_\tau(B_N)-\frac 1 2+\varepsilon$. Then, by the previous lemma applied to the groups $G_1\cdots G_{N-1}$ and $G_{N}$ we have for the factor decomposition $S(G_1\cdots G_{N})=\int F^xd\mu_\tau(x)$ that
$$
\{x: \dim(F^x)\geq k^2\}\supseteq B_{N}\cap \bigcup_{i<N} B_i,
$$
and by the choice of $N$, the set on the right hand side has measure greater than $\mu_\tau(B_n)-(\mu_\tau(B_n)-\frac 1 2+\varepsilon)=\frac 1 2-\varepsilon$.
\end{proof}

\begin{lemma}\label{l.dimgrowth}
Let $1>\varepsilon>0$, $k \geq 1$. Let $G$ be a group which contains a sequence of commuting non-abelian subgroups $(G_i)_{i\in\N}$. Then there is $N\in\N$ such that for the direct integral decomposition $S(G_1\cdots G_N)=\int F^xd\mu(x)$ it holds that
$$
\mu(\{x: \dim(F^x)\geq 2^{2^{k-1}}\})>\frac 1 2-\varepsilon.
$$
\end{lemma}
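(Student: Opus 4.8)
The plan is to prove the statement by induction on $k\geq 1$, using \Cref{l.nonab} for the base case and \Cref{l.estimate} to drive the inductive step. For $k=1$ the target dimension is $2^{2^{0}}=2$, and $N=1$ already works: since $G_1$ is non-abelian, \Cref{l.nonab} applied to $H=G_1$ gives $\mu_\tau(\{x:F^x\not\simeq\C\})\geq\frac12$, and any von Neumann factor not isomorphic to $\C$ has dimension at least $2$; hence $\mu_\tau(\{x:\dim F^x\geq 2\})\geq\frac12>\frac12-\varepsilon$.

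For the inductive step, assume the statement for $k$ (in the form: for every $\varepsilon'\in(0,1)$ and every group carrying a sequence of commuting non-abelian subgroups, some prefix product of that sequence satisfies the conclusion with threshold $2^{2^{k-1}}$), and prove it for $k+1$. Put $\kappa:=2^{2^{k-1}}$, so that the target for $k+1$ is $2^{2^{k}}=\kappa^2$. First I would partition $\N$ into infinitely many infinite sets $(I_j)_{j\in\N}$. For each $j$ the group $G$ still carries the sequence of commuting non-abelian subgroups $(G_i)_{i\in I_j}$, so the induction hypothesis (applied with this sequence and the given $\varepsilon$) yields a finite set $J_j\subseteq I_j$ such that, writing $H_j:=\prod_{i\in J_j}G_i$, the decomposition $S(H_j)=\int F_j^x\,d\mu_\tau(x)$ satisfies $\mu_\tau(\{x:\dim F_j^x\geq\kappa\})>\frac12-\varepsilon$. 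Since the $J_j$ are pairwise disjoint and all the $G_i$ commute, the subgroups $(H_j)_{j\in\N}$ pairwise commute, so \Cref{l.estimate} applies to them with threshold $\kappa$ (note $\kappa\geq 2>1$) and the same $\varepsilon$: it produces $M\in\N$ such that, writing $A:=H_0\cdots H_M$, the decomposition $S(A)=\int F^x\,d\mu_\tau(x)$ satisfies $\mu_\tau(\{x:\dim F^x\geq\kappa^2\})>\frac12-\varepsilon$.

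It remains to transfer this back to a genuine prefix product. Now $A=\prod_{i\in J}G_i$ for a finite $J\subseteq\N$; set $N:=\max J$, so that $G_1\cdots G_N=AK$ where $K:=\prod_{i\leq N,\ i\notin J}G_i$ commutes elementwise with $A$. Because $A$ and $K$ commute one has $Z(S(A))\subseteq Z(S(AK))=Z(S(G_1\cdots G_N))$ --- exactly as in the proof of \Cref{l.estimate} --- so the central projection $p_0\in Z(S(A))$ corresponding to $\{x:\dim F^x\geq\kappa^2\}$ is also central in $S(G_1\cdots G_N)$, and $\mu_\tau(\supp p_0)$ is the same in either algebra since both measures come from the trace $\tau$ on $L(G)$. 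Applying \Cref{l.tensor}(2) to the commuting pair $A,K$ with $p_0$ and $n_0:=\kappa$ (a factor of dimension $\geq\kappa^2$ contains a copy of $M_\kappa(\C)$), and with $p_1:=I$, $n_1:=1$, we get that almost everywhere on $\supp(p_0p_1)=\supp p_0$ the corresponding factor of $S(G_1\cdots G_N)$ contains a copy of $M_\kappa(\C)$, hence has dimension $\geq\kappa^2=2^{2^{k}}$. Thus $\mu_\tau(\{y:\dim F^y\geq 2^{2^{k}}\})\geq\mu_\tau(\supp p_0)>\frac12-\varepsilon$ for $S(G_1\cdots G_N)=\int F^y\,d\mu_\tau(y)$, which is the statement for $k+1$.

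I expect the only genuine subtlety --- and the step I would spell out most carefully --- to be this last transfer: \Cref{l.estimate} returns a product $A=H_0\cdots H_M$ of \emph{some} of the $G_i$, not the prefix $G_1\cdots G_N$ demanded by the statement, and one must check that enlarging $A$ to that prefix neither shrinks the relevant measure nor destroys the matrix-unit structure. Both follow from the single observation $Z(S(A))\subseteq Z(S(AK))$ for commuting subgroups together with the trivial $n_1=1$ case of \Cref{l.tensor}(2); everything else is bookkeeping with the partition and carrying the fixed $\varepsilon$ unchanged through the induction.
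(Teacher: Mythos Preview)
Your proof is correct and follows the same skeleton as the paper's: induction on $k$, with \Cref{l.nonab} giving the base case and \Cref{l.estimate} driving the step. The paper's proof is two lines, so you have simply filled in what it leaves implicit.

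The one place you diverge is in how you manufacture the sequence fed into \Cref{l.estimate}. You partition $\N$ into infinite blocks $(I_j)$, apply the induction hypothesis to each block to obtain $H_j$, and then apply \Cref{l.estimate} to the sequence $(H_j)$. This forces the extra ``transfer'' paragraph, because $H_0\cdots H_M$ is a product over a scattered finite set $J\subseteq\N$ rather than a prefix $G_1\cdots G_N$. The paper (implicitly) uses the sequential construction instead: apply the induction hypothesis to $(G_1,G_2,\ldots)$ to get $H_1=G_1\cdots G_{N_1}$, then to the tail $(G_{N_1+1},\ldots)$ to get $H_2=G_{N_1+1}\cdots G_{N_1+N_2}$, and so on. Then $H_1\cdots H_M$ is automatically a genuine prefix $G_1\cdots G_N$, and the transfer step disappears. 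Your transfer argument is fine, but it is an avoidable complication. One small caution: when you invoke \Cref{l.tensor}(2) in the transfer with $n_0=\kappa$, note that the lemma's hypothesis asks for a \emph{unital} copy of $M_\kappa(\C)$, which an arbitrary factor of dimension $\geq\kappa^2$ need not contain (e.g.\ $M_3(\C)$ has no unital $M_2(\C)$); however, the \emph{proof} of \Cref{l.tensor}(2) only uses a non-zero system of matrix units, which such a factor always has, so the argument goes through.
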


\begin{proof}
By induction on $k$. The case $k=1$ follows from \Cref{l.nonab}. The inductive step follows from the previous lemma.
\end{proof}

\begin{proof}[Proof of \Cref{t.hard} (A)]
Let $G_\infty$ be as in the proof of the previous lemma, and let $S(G_\infty)=\int F^x d\mu_\tau(x)$ be the direct integral decomposition, with $\mu_\tau$ the measure derived from the trace.

\medskip

{\sc Claim.} $\mu_\tau(\{x: \dim(F^x)=\infty\})\geq \frac 1 2$, and so $S(G_\infty)$ is not type I.

\medskip

\noindent {\it Proof of Claim.}
Fix $\varepsilon>0$. It suffices to show that $\mu_\tau(\{x: \dim(F^x)>k\})> \frac 1 2-\varepsilon$ for all $k\in\N$.

Using the previous lemma, find $N\in\N$ such that for the direct integral decomposition $S(G_1\cdots G_N)=\int E^xd\mu_\tau(x)$ we have
$$
\mu_\tau(\{x: \dim(E^x)>k\})>\frac 1 2-\varepsilon.
$$
Let $G_{N+1,\infty}=\langle\bigcup_{i>N} G_i\rangle$. Since $G_\infty=(G_1\cdots G_N)G_{N+1,\infty}$, it follows by \Cref{l.tensor} that
$$
\mu_\tau(\{x: \dim(F^x)>k\})>\frac 1 2 -\varepsilon.
$$
\hfill \hfill $_{\text{Claim.}}\dashv$

\noindent As $L(G)\supseteq S(G_\infty)$, it follows that $L(G)$ is not type I.
\end{proof}

{\sc Remark.} The reader may note that the sequence of Lemmata \ref{l.nonab}--\ref{l.dimgrowth} can be used to give an explicit embedding of the hyperfinite $\II_1$ factor $R$ into a corner of $L(G)$ under the assumptions of \Cref{t.hard} (A).

\subsection{Proof of part (B) of \Cref{t.hard}}

We continue to let $G$ denote a countable discrete group. The following lemma gives the sequence of subgroups needed for \Cref{t.hard} (B).

\begin{lemma}
Suppose $\gf$ is not abelian-by-finite. Then there are sequences $(g_i)_{i\in\N}$ and $(h_i)_{i\in\N}$ of elements of $\gf$, and a sequence of subgroups $(G_i)_{i \in \omega}$ of $\gf$, having the following properties:

\begin{enumerate}
\item $g_i$ and $h_i$ do not commute.
\item $G_i=\langle [g_i]_{\gf}, [h_i]_{\gf}\rangle$. In particular, $G_i$ is non-abelian by (1).
\item $G_i$ and $G_j$ are commuting subgroups of $G$ whenever $i\neq j$ (i.e.,~all elements of $G_i$ commute with all elements of $G_j$ whenever $i\neq j$).
\end{enumerate}
\end{lemma}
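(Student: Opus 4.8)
The plan is to construct the three sequences by recursion on $i$, at each stage passing to a finite-index subgroup of $\gf$ inside which the next pair of non-commuting elements is found, while carrying along a centralizer that guarantees commutation with everything built so far.

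Two elementary features of $\gf$ make this work. First, if $g\in\gf$ then $[g]_{\gf}$ is finite and $C_{\gf}(g)$ has finite index in $\gf$ (its index equals $|[g]_{\gf}|$); hence for any finite set $F\subseteq\gf$ the centralizer $C_{\gf}(F)=\bigcap_{f\in F}C_{\gf}(f)$ has finite index in $\gf$. Second, any finite-index subgroup $H\leq\gf$ is again not abelian-by-finite, since an abelian finite-index subgroup of $H$ would be an abelian finite-index subgroup of $\gf$; in particular every finite-index subgroup of $\gf$ is non-abelian.

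Now I would run the recursion as follows. Suppose non-abelian normal subgroups $G_1,\dots,G_{n-1}$ of $\gf$ have been built, each generated by a finite subset of $\gf$, and pairwise commuting (for $n=1$ there is nothing to assume). Since the $G_j$ pairwise commute, $G_1\cdots G_{n-1}$ is a subgroup of $\gf$, and $K_{n-1}:=C_{\gf}(G_1\cdots G_{n-1})=\bigcap_{j<n}C_{\gf}(G_j)$. As each $G_j$ is generated by finitely many elements of $\gf$, the first observation gives $[\gf:K_{n-1}]<\infty$, and $K_{n-1}$ is normal in $\gf$, being the centralizer of a normal subgroup. By the second observation $K_{n-1}$ is non-abelian, so I pick non-commuting $g_n,h_n\in K_{n-1}$, which gives (1). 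Put $G_n=\langle[g_n]_{\gf},[h_n]_{\gf}\rangle$: its generating set is a union of two $\gf$-conjugacy classes, so $G_n\trianglelefteq\gf$; this generating set is finite since $g_n,h_n\in\gf$; and $G_n$ is non-abelian because $g_n,h_n\in G_n$ do not commute, which gives (2). Finally, since $g_n,h_n\in K_{n-1}$ and $K_{n-1}\trianglelefteq\gf$, the full conjugacy classes $[g_n]_{\gf}$ and $[h_n]_{\gf}$ lie in $K_{n-1}$, whence $G_n\subseteq K_{n-1}=\bigcap_{j<n}C_{\gf}(G_j)$, so $G_n$ commutes with each $G_j$ for $j<n$, giving (3). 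This re-establishes all the inductive hypotheses, so the recursion continues indefinitely and produces the desired sequences.

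There is no serious obstacle here; the one thing to watch is the bookkeeping that keeps the invariant alive, namely that $K_{n-1}$ stays a finite-index — and hence, by the second observation, non-abelian — subgroup of $\gf$. This is precisely the point where the hypothesis is used as ``not abelian-by-finite'' rather than merely ``non-abelian'': one cannot in general find non-commuting elements inside a prescribed finite-index subgroup of a non-abelian group, but one can inside a finite-index subgroup of a group that is not abelian-by-finite. The finiteness of conjugacy classes inside $\gf$ is what makes each previously built $G_j$ finitely generated, which is exactly what is needed for its centralizer to have finite index.
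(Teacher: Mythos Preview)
Your proof is correct and follows essentially the same route as the paper's. The only cosmetic difference is that the paper phrases your finite-index centralizer $K_{n-1}=C_{\gf}(G_1\cdots G_{n-1})$ as the kernel of the conjugation homomorphism $\gf\to S_K$ where $K=\bigcup_{j<n}([g_j]_{\gf}\cup[h_j]_{\gf})$; since $G_1\cdots G_{n-1}=\langle K\rangle$, this kernel equals your $K_{n-1}$, and from there the two arguments are identical.
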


\begin{proof} We will define the sequences $(g_i)_{i\in\N}$ and $(h_i)_{i\in\N}$ recursively and see that (1), (2) and (3) hold.

First pick $g_1,h_1\in \gf$ to be any two non-commuting elements of $\gf$. Suppose then that $g_1,\ldots, g_k$ and $h_1,\ldots, h_k$ have been defined, and that (1), (2) and (3) are satisfied. Let
$$
K=\bigcup\{[g_i]_{\gf},[h_i]_{\gf}: 1\leq i\leq k\},
$$
and note that $K$ is finite and invariant under conjugation in $\gf$. For each $g\in \gf$, let $\gamma_g:\gf \to \gf: h\mapsto ghg^{-1}$. Note that $\gamma_g(K)=K$, and each $\gamma_g$ is completely determined by its values on $K$. Thus $\psi: g\mapsto\gamma_g\restrict K$ is a homomorphism from $\gf$ to $S_K$, the group of permutations of $K$. Since $K$ is finite, $\ker(\psi)$ has finite index in $\gf$, and since $aba^{-1}=b$ for any $a\in\ker(\psi)$ and $b\in K$, it follows that the elements of $\ker(\psi)$ commute with all elements in the subgroups $G_1,\ldots, G_k$. Since $\gf$ is not abelian-by-finite, $\ker(\psi)$ is not abelian, and so we can pick non-commuting elements $g_{k+1},h_{k+1}\in \ker(\psi)$. Since $\ker(\psi)$ is normal in $\gf$ we have $[g_{k+1}]_{\gf},[h_{k+1}]_{\gf}\subseteq \ker(\psi)$, and so $G_{k+1}=\langle [g_{k+1}]_{\gf},[h_{k+1}]_{\gf}\rangle$ commutes with all the subgroups $G_1,\ldots, G_k$.\end{proof}

\begin{proof}[Proof of \Cref{t.hard} {\rm (B)}]
If $[G:\gf]<\infty$ and $G$ is not abelian-by-finite, then $\gf$ is not abelian-by-finite. By the previous lemma, $\gf$, and therefore $G$, contains a sequence of non-abelian pairwise commuting subgroups, as required.
\end{proof}

\bibliographystyle{amsalpha}
\bibliography{Thoma-new_proof}

\providecommand{\bysame}{\leavevmode\hbox to3em{\hrulefill}\thinspace}
\providecommand{\MR}{\relax\ifhmode\unskip\space\fi MR }
\providecommand{\MRhref}[2]{%
  \href{http://www.ams.org/mathscinet-getitem?mr=#1}{#2}
}
\providecommand{\href}[2]{#2}
\begin{thebibliography}{{Eps}07}

\bibitem[Bla06]{Blackadar06}
B.~Blackadar, \emph{Operator algebras}, Encyclopaedia of Mathematical Sciences,
  vol. 122, Springer-Verlag, Berlin, 2006, Theory of $C^*$-algebras and von
  Neumann algebras, Operator Algebras and Non-commutative Geometry, III.
  \MR{2188261}

\bibitem[Dix96]{Dixmier96}
Jacques Dixmier, \emph{Les alg\`ebres d'op\'{e}rateurs dans l'espace hilbertien
  (alg\`ebres de von {N}eumann)}, Les Grands Classiques Gauthier-Villars.
  [Gauthier-Villars Great Classics], \'{E}ditions Jacques Gabay, Paris, 1996,
  Reprint of the second (1969) edition. \MR{1451139}

\bibitem[Eff65a]{Effros65b}
Edward~G. Effros, \emph{The {B}orel space of von {N}eumann algebras on a
  separable {H}ilbert space}, Pacific J. Math. \textbf{15} (1965), 1153--1164.
  \MR{0185456}

\bibitem[Eff65b]{Effros65}
\bysame, \emph{Transformation groups and {$C\sp{\ast} $}-algebras}, Ann. of
  Math. (2) \textbf{81} (1965), 38--55. \MR{0174987}

\bibitem[Eff66]{Effros66}
\bysame, \emph{Global structure in von {N}eumann algebras}, Trans. Amer. Math.
  Soc. \textbf{121} (1966), 434--454. \MR{0192360}

\bibitem[Eff81]{Effros81}
\bysame, \emph{Polish transformation groups and classification problems},
  General topology and modern analysis ({P}roc. {C}onf., {U}niv. {C}alifornia,
  {R}iverside, {C}alif., 1980), Academic Press, New York-London, 1981,
  pp.~217--227. \MR{619045}

\bibitem[{Eps}07]{Epstein07}
Inessa {Epstein}, \emph{{Orbit inequivalent actions of non-amenable groups}},
  arXiv e-prints (2007), arXiv:0707.4215.

\bibitem[Far12]{Farah12}
Ilijas Farah, \emph{A dichotomy for the {M}ackey {B}orel structure},
  Proceedings of the 11th {A}sian {L}ogic {C}onference, World Sci. Publ.,
  Hackensack, NJ, 2012, pp.~86--93. \MR{2868507}

\bibitem[Gli61a]{Glimm61+}
James Glimm, \emph{Locally compact transformation groups}, Trans. Amer. Math.
  Soc. \textbf{101} (1961), 124--138. \MR{0136681}

\bibitem[Gli61b]{Glimm61}
\bysame, \emph{Type {I} {$C^{\ast} $}-algebras}, Ann. of Math. (2) \textbf{73}
  (1961), 572--612. \MR{0124756}

\bibitem[Hjo97]{Hjorth97}
Greg Hjorth, \emph{{Non-smooth infinite dimensional group representations}},
  UCLA (1997), typed note,
  \url{http://www.math.ucla.edu/~greg/representation.ps}.

\bibitem[{Hjo}05]{Hjorth05}
Greg {Hjorth}, \emph{{A converse to Dye's theorem.}}, {Trans. Am. Math. Soc.}
  \textbf{357} (2005), no.~8, 3083--3103 (English).

\bibitem[HT12]{HT12}
Greg Hjorth and Asger T{\"o}rnquist, \emph{The conjugacy relations on unitary
  representations}, Math. Res. Lett. \textbf{19} (2012), no.~3, 525--535.
  \MR{2998137}

\bibitem[HW98]{HaaWin98}
Uffe Haagerup and Carl Winsl{\o}w, \emph{The {E}ffros-{M}ar\'{e}chal topology
  in the space of von {N}eumann algebras}, Amer. J. Math. \textbf{120} (1998),
  no.~3, 567--617. \MR{1623416}

\bibitem[HW00]{HaaWin00}
\bysame, \emph{The {E}ffros-{M}ar\'{e}chal topology in the space of von
  {N}eumann algebras. {II}}, J. Funct. Anal. \textbf{171} (2000), no.~2,
  401--431. \MR{1745629}

\bibitem[{Ioa}11]{Ioana11}
Adrian {Ioana}, \emph{Orbit inequivalent actions for groups containing a copy
  of ${F}_2$}, Inventiones Mathematicae \textbf{185} (2011), 55--73.

\bibitem[Kec95]{Kechris95}
Alexander~S. Kechris, \emph{Classical descriptive set theory}, Graduate Texts
  in Mathematics, vol. 156, Springer-Verlag, New York, 1995. \MR{1321597}

\bibitem[KR97]{KR97}
Richard~V. Kadison and John~R. Ringrose, \emph{Fundamentals of the theory of
  operator algebras. {V}ol. {II}}, Graduate Studies in Mathematics, vol.~16,
  American Mathematical Society, Providence, RI, 1997, Advanced theory,
  Corrected reprint of the 1986 original. \MR{1468230}

\bibitem[Nie80]{Nielsen80}
Ole~A. Nielsen, \emph{Direct integral theory}, Lecture Notes in Pure and
  Applied Mathematics, vol.~61, Marcel Dekker, Inc., New York, 1980.
  \MR{591683}

\bibitem[RLL00]{RoLaLa00}
M.~R{\o}rdam, F.~Larsen, and N.~Laustsen, \emph{An introduction to {$K$}-theory
  for {$C^*$}-algebras}, London Mathematical Society Student Texts, vol.~49,
  Cambridge University Press, Cambridge, 2000. \MR{1783408}

\bibitem[T\"09]{Tornquist09}
Asger T\"{o}rnquist, \emph{Conjugacy, orbit equivalence and classification of
  measure-preserving group actions}, Ergodic Theory Dynam. Systems \textbf{29}
  (2009), no.~3, 1033--1049. \MR{2505327}

\bibitem[Tho64]{Thoma64}
Elmar Thoma, \emph{\"{U}ber unit\"are {D}arstellungen abz\"ahlbarer, diskreter
  {G}ruppen}, Math. Ann. \textbf{153} (1964), 111--138. \MR{0160118}

\bibitem[Tho15]{Thomas15}
Simon Thomas, \emph{A descriptive view of unitary group representations}, J.
  Eur. Math. Soc. (JEMS) \textbf{17} (2015), no.~7, 1761--1787. \MR{3361728}

\end{thebibliography}
\end{document}